\documentclass[12pt]{amsart}
\usepackage{hyperref}
\usepackage{amsfonts,mathrsfs,bbm,rawfonts,amsmath,amssymb}
\usepackage{fullpage, setspace,geometry,booktabs}
\usepackage{graphics, color}
\usepackage{tikz-cd}
\usepackage{float}
\newtheorem{thm}{Theorem}[section]
\newtheorem{lemma}[thm]{Lemma}
\newtheorem*{lemma*}{Lemma}
\newtheorem{prop}[thm]{Proposition}

\newtheorem{cor}[thm]{Corollary}

\newtheorem*{conj*}{Conjecture}

\newtheorem{rmk}[thm]{Remark}
\newtheorem*{rmk*}{Remark}

\numberwithin{equation}{section}
 \newcommand{\al}{\alpha}
 \newcommand{\be}{\beta}
 \newcommand{\ld}{\lambda}
 
 \newcommand{\de}{\delta}
 
 \newcommand{\ep}{\epsilon}
 \newcommand{\Si}{\Sigma}

 \newcommand{\G}{\mathscr{G}}

 \newcommand{\R}{\mathbb{R}}

 \newcommand{\norm}[1]{\Vert#1\Vert}
 
 \renewcommand{\t}{\tilde}
 \renewcommand{\b}{\bar}
 
 \newcommand{\p}{\partial}
 \newcommand{\n}{\nabla}

 \def\<{\langle} \def\>{\rangle}
 \def\({\left(} \def\){\right)}

 \DeclareMathOperator{\tr}{tr}
 
 \DeclareMathOperator{\Ric}{Ric}
 \DeclareMathOperator{\vol}{vol}

\title
{Correspondence between Mean Curvature Flow and Harmonic-Ricci Flow}

\author[T. Ren and C. Song]
{Tianyin Ren and Chong Song}

\address{Tianyin Ren
\newline\indent
School of Mathematical Sciences, Xiamen University
\newline\indent
Fujian, 361005, P.R. China
}
\email{tianyin.ren@xmu.edu.cn}

\address{Chong Song \newline\indent School of Mathematical Sciences, Xiamen University
\newline\indent
Fujian, 361005, P.R. China
}
\email{songchong@xmu.edu.cn}

\date{May 16, 2026}

\subjclass[2020]{53E10, 53E20, 58E20}

\begin{document}
\setlength{\baselineskip}{15pt}

\begin{abstract}
	In this paper, we observe that the (spacelike) mean curvature flow of a submanifold in a (pseudo-)Euclidean space is equivalent to a harmonic-Ricci flow with coupling constant $\alpha=-1$ (or $+1$), for the corresponding Gauss map and the induced metric. The solitons of these two flows are also equivalent. As an application, we get a monotonicity formula for the spacelike mean curvature flow.
\end{abstract}
\maketitle

\section{Introduction}

In 1970, Ruh and Vilms~\cite{RV1970} observed that the tension field of the Gauss map of a submanifold in a Euclidean space can be identified with the covariant derivative of the mean curvature vector field. Therefore, the Gauss map of a submanifold with parallel mean curvature vector field, including minimal submanifolds, is harmonic. Following this line, Wang~\cite{Wang2003} showed a parabolic analog of Ruh-Vilms' theorem. Namely, the Gauss map of a \emph{mean curvature flow (MCF for short)} in a Euclidean space satisfies the harmonic map heat flow.

In this paper, we go one step further and show that the Gauss map together with the induced metric of a MCF forms a closed system, i.e. the so-called \emph{harmonic-Ricci flow (HRF for short)}. Conversely, given the Gauss map and induced metric of an initial submanifold, we can first solve the corresponding HRF and reconstruct the solution to the MCF. This equivalence to the HRF also holds true for \emph{spacelike mean curvature flow (SMCF for short)} in pseudo-Euclidean spaces, with a sign difference in the coupling constant.

To state our main results, let us first fix some notations. Let $\Si$ be a compact or complete non-compact $n$-dimensional manifold. Let $M$ be an $n+k$ dimensional flat ambient space and $\ep\in \{-1, +1\}$ be a sign indicator. We consider two primary cases:
\begin{itemize}
    \item $\Sigma$ is a submanifold in Euclidean space $M = \R^{n+k}$, and we set $\ep = 1$.
    \item $\Sigma$ is a spacelike submanifold in pseudo-Euclidean space $M = \R^{n,k}$, and we set $\ep = -1$.
\end{itemize}
For simplicity, we will always use MCF to refer to both cases unless otherwise stated.

\begin{thm}\label{thm:main}
Suppose $X_0:\Si \to M$ is a immersed (spacelike) submanifold with Gauss map $\rho_0$ and pull-back metric $g_0$. Then the MCF
\begin{equation}\label{eq:mcf}
	\begin{cases}
		\partial_t X = H, \\
		X(0) = X_{0}.
	\end{cases}
\end{equation}
and the HRF
\begin{equation}\label{eq:hrf-gauss}
	\begin{cases}
        \partial_t \rho = \tau_g(\rho), \\
		\partial_t g = -2\Ric_g + 2 \alpha ~d\rho\otimes d\rho, \\
        g(0) = g_{0}, \quad \rho(0) = \rho_{0},
	\end{cases}
\end{equation}
with coupling constant $\alpha=-\ep$, are equivalent in the following sense.

Suppose $X(t)$ is the unique solution to the MCF~\eqref{eq:mcf} and $(\rho(t), g(t))$ is the unique solution to the HRF~\eqref{eq:hrf-gauss}. Then
\begin{enumerate}
    \item The gauss map and induced metric of $X(t)$ is exactly $(\rho(t), g(t))$.
    \item The MCF $X(t)$ can be directly constructed form the HRF $(\rho(t), g(t))$.
\end{enumerate}
\end{thm}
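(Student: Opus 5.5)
The plan is to verify, by a direct computation along a solution $X(t)$ of the MCF~\eqref{eq:mcf}, that its Gauss map and induced metric satisfy the HRF~\eqref{eq:hrf-gauss} with $\alpha=-\ep$. Uniqueness of the HRF then gives statement (1). For statement (2), I will recover $X(t)$ from $g(t)$ alone by solving a linear heat equation.

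\textbf{Metric equation.} Along $\p_t X=H$ with $H$ normal, the standard first variation gives $\p_t g_{ij}=-2\<H,A_{ij}\>$, where $A$ is the second fundamental form. The ambient space is flat, so the Gauss equation reads
\[
\Ric_{ij}=\<H,A_{ij}\>-g^{kl}\<A_{ik},A_{jl}\>.
\]
Substituting, I get
\[
\p_t g_{ij}=-2\Ric_{ij}-2g^{kl}\<A_{ik},A_{jl}\>.
\]
Next I identify $d\rho\otimes d\rho$. I view $\rho$ as a map into the Grassmannian of (spacelike) $n$-planes, with its natural invariant metric. In the pseudo-Euclidean case this Grassmannian is the Riemannian symmetric space of noncompact type. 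In both cases, by the Ruh--Vilms description, $d\rho(e_i)$ corresponds to the linear map $e_j\mapsto A(e_i,e_j)$ from the tangent plane to the normal plane. Hence
\[
(d\rho\otimes d\rho)_{ij}=\ep\, g^{kl}\<A_{ik},A_{jl}\>.
\]
The factor $\ep$ appears because the normal directions are timelike when $\ep=-1$, so that $|d\rho|^2\ge 0$ in either case. It follows that $-2g^{kl}\<A_{ik},A_{jl}\>=2(-\ep)\,d\rho\otimes d\rho$, which is exactly the metric equation of~\eqref{eq:hrf-gauss} with $\alpha=-\ep$.

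\textbf{Gauss map equation.} For the equation $\p_t\rho=\tau_g(\rho)$ I follow Wang's parabolic Ruh--Vilms argument~\cite{Wang2003}. The Ruh--Vilms identity gives $\tau_g(\rho)\leftrightarrow\n^\perp H$. Differentiating the tangent plane $dX(T\Si)$ in time with $\p_t X=H$ gives $\p_t(\p_i X)=\p_i H$, whose normal component is $\n^\perp_i H$. Since the tangential component of $\p_i H$ only rotates the plane within itself, $\p_t\rho\leftrightarrow\n^\perp H$ as well. In the pseudo-Euclidean case one must redo these computations with the indefinite inner product and check that the signs cancel in the same way. I expect this sign bookkeeping, together with matching the Grassmannian metric normalization to the constant in $d\rho\otimes d\rho$, to be the main technical point. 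I will handle it with a moving frame adapted to the splitting of $\R^{n,k}$ into the spacelike tangent plane and the timelike normal space.

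\textbf{Conclusion of (1) and (2).} By the computations above, the pair (Gauss map, induced metric) of $X(t)$ solves~\eqref{eq:hrf-gauss} with the same initial data as $(\rho(t),g(t))$. By the assumed uniqueness of the HRF, the two pairs coincide, which proves (1). For (2), take the given $g(t)$ from the HRF and solve the linear system
\[
\p_t Y=\Delta_{g(t)}Y,\qquad Y(0)=X_0,
\]
componentwise in $M$. This equation is linear once $g(t)$ is known, so it has a unique solution in the relevant class. The MCF solution satisfies $\Delta_{g_X}X=H$, and $g_X=g(t)$ by (1), so $X(t)$ solves this same equation. Uniqueness for the linear heat equation then forces $Y=X$. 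Thus $X(t)$ is constructed from $(\rho(t),g(t))$; in fact only $g(t)$ is needed, and $\rho(t)$ is recovered as its Gauss map. In the complete noncompact case, this uniqueness step for the linear heat equation requires the appropriate growth assumptions, in the same class in which uniqueness of the MCF and HRF is assumed.
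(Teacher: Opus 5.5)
Your proof of (1) is the paper's argument: $\p_t g=-2\<H,A\>$ combined with the traced Gauss equation, Wang's parabolic Ruh--Vilms theorem for the Gauss map, and then uniqueness of the HRF. Your identification $(d\rho\otimes d\rho)_{ij}=\ep\, g^{kl}\<A_{ik},A_{jl}\>$ is exactly the paper's sum of squares $\n_i\rho_k^\al\n_j\rho_k^\al$, which gives $\al=-\ep$.

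For (2) you take a genuinely different and more elementary route. The paper builds a bundle isometry $\Phi:\mathcal{H}\to\rho^*\G^\top$ by parallel transporting frames in the $t$-direction. On one side it uses the intrinsic connection $\n_t\p_i=\frac12 g^{jl}\p_t g_{il}\,\p_j$, and on the other $\rho^*\b\n^\top$, which involves $\p_t\rho$. ODE uniqueness then forces $\Phi=dX$, so $H=\tr_\Phi d\rho$, and $X$ is recovered by integrating $\p_t\t X=\tr_\Phi d\rho$ along each time line. You instead observe that $\p_t X=H=\Delta_{g(t)}X$ is a linear heat equation once $g(t)$ is given.

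Your version avoids the space-time bundle machinery entirely. It also shows that $g(t)$ and $X_0$ alone determine the flow, so $\rho$ is redundant for the reconstruction. The price is that you need uniqueness for a linear parabolic PDE. This is automatic on closed $\Si$. In the complete non-compact case it is an extra hypothesis (growth and bounded-geometry conditions) beyond the uniqueness of MCF and HRF assumed in the theorem, as you acknowledge.

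The paper's reconstruction uses only ODEs pointwise in $x$, so it needs no analytic input beyond those assumptions. It also recovers the soldering map $dX=\Phi$ and expresses $H$ through $d\rho$, which makes explicit how the Gauss map encodes the flow. In both approaches the reconstructed object is identified with the MCF only through the assumed uniqueness, not by showing directly that it is an immersion inducing $g(t)$.
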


\begin{rmk}
\begin{enumerate}
    \item The correspondence between MCF and HRF in Theorem~\ref{thm:main} remains valid even without assuming uniqueness of the solutions. We impose the uniqueness assumption here to simplify the exposition and to avoid addressing uniqueness issues, which are not the main focus of the present paper. In particular, uniqueness is standard when $\Si$ is closed.
    \item The precise meaning of item~(2) in Theorem~\ref{thm:main} is explained in Section~2.2, where we give an explicit method of reconstructing the MCF.
\end{enumerate}
\end{rmk}

Theorem~\ref{thm:main} shows that the HRF naturally links perhaps the most important three geometric flows, i.e. the harmonic map heat flow, the Ricci flow and the mean curvature flow. Now any results of the MCF would imply a parallel result of the corresponding HRF into Grassmannian manifolds. Conversely, the HRF can be viewed as an intrinsic version of the MCF, and provides a different perspective to the analysis of singularities, especially for higher codimensional cases.

For example, the following corollary is now a direct consequence of Theorem~\ref{thm:main} and well-known facts of MCF.

\begin{cor}\label{c:main}
Under the assumption of Theorem~\ref{thm:main}, we have
\begin{enumerate}
  \item The Gauss-Codazzi equations are preserved under the HRF~\eqref{eq:hrf-gauss}.
  \item When $\Si$ is spacelike, the HRF~\eqref{eq:hrf-gauss} exists for all time.
  \item When $\Si$ is closed and $M$ is Euclidean, the HRF~\eqref{eq:hrf-gauss} blows up at a finite time $T$, which is characterized by
    \begin{equation}\label{e:maximal-time}
        \lim_{t\to T^-}\norm{d\rho(t)}_{L^\infty(\Si)}=+\infty.
    \end{equation}
\end{enumerate}
\end{cor}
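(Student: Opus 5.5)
We derive all three items of Corollary~\ref{c:main} from Theorem~\ref{thm:main} by transferring known MCF facts to the HRF. The dictionary is item~(1) of Theorem~\ref{thm:main}: the solution $(\rho(t),g(t))$ of \eqref{eq:hrf-gauss} is the Gauss map and induced metric of the MCF $X(t)$. Because both solutions are unique, the maximal existence intervals of the two flows coincide. If the MCF exists on $[0,T)$, item~(2) shows that the HRF exists there too. Conversely, if the HRF exists on $[0,T)$, the reconstruction in Section~2.2 produces an MCF on $[0,T)$. So every statement about the existence time of one flow becomes a statement about the other.

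For item~(1), we start from the data $(\rho_0,g_0)$, which come from the immersion $X_0$. These data satisfy the Gauss--Codazzi equations, where the second fundamental form is read off from $d\rho$ through the identification of $T\mathrm{Gr}$ with $\mathrm{Hom}(T\Si,N\Si)$. For $t>0$, $(\rho(t),g(t))$ is again the Gauss map and induced metric of the immersion $X(t)$. The Gauss--Codazzi (and Ricci) equations hold automatically for any immersion into a flat space. Hence they are preserved along the HRF, and this part needs no computation.

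For item~(2), we invoke the long-time existence of spacelike MCF in pseudo-Euclidean space. For entire spacelike graphs this is due to Ecker--Huisken in codimension one and to Li--Salavessa in higher codimension; for closed spacelike submanifolds, the a priori estimates come from the fact that the Gauss map takes values in a nonpositively curved symmetric space. By Theorem~\ref{thm:main}, the HRF with $\alpha=+1$ then exists for all time. This is the step I expect to be the main obstacle. We must check that the hypotheses of the cited long-time existence results (for instance, bounded gradient or a spacelike entire graph condition) are met under the standing assumptions on $\Si$. If they are not, we restrict to the setting those results cover, say compact $\Si$ or entire graphs.

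For item~(3), a closed submanifold of Euclidean space has finite MCF existence time, by comparison with a shrinking sphere enclosing $X_0(\Si)$. By Huisken's blow-up criterion (Huisken, Chen--Yin in higher codimension), the second fundamental form satisfies $\sup_\Si|A|\to\infty$ as $t\to T^-$. The Gauss map satisfies $|d\rho|_g=|A|_g$ pointwise, since $d\rho$ is exactly the second fundamental form under the identification above. Therefore \eqref{e:maximal-time} follows, and $T$ is also the maximal time for the HRF. Conversely, if $\norm{d\rho}_{L^\infty}$ stays bounded near $T$, then $|A|$ stays bounded and the MCF, and hence the HRF, would extend past $T$. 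This shows the characterization is sharp.
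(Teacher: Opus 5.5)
Your proposal is correct and follows the paper's own argument. Item (1) holds because $(\rho(t),g(t))$ is the Gauss map and induced metric of the immersion $X(t)$ by Theorem~\ref{thm:main}; item (2) follows from the long-time existence of spacelike MCF; and item (3) follows from the finite-time blow-up of closed MCF, characterized by $\sup_\Si|A|\to\infty$, together with $|d\rho|=|A|$.

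One small remark: your fallback to ``compact $\Si$'' in item (2), and the aside on closed spacelike submanifolds, are vacuous. For a spacelike $n$-dimensional immersion into $\R^{n,k}$, the projection onto the spacelike factor $\R^n$ is a local diffeomorphism, so $\Si$ can never be closed in that setting.
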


Another consequence is the correspondence between MCF solitons and HRF gradient solitons.

\begin{cor}\label{c:soliton}
A submanifold $X:\Si\to M$ is a soliton of the MCF~\eqref{eq:mcf} if and only if the corresponding pair  $(\rho, g)$ consisting of its Gauss map and induced metric is a gradient soliton of the HRF~\eqref{eq:hrf-gauss}.
\end{cor}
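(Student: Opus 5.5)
The plan is to pass between the two notions of soliton through their self-similar evolutions, using Theorem~\ref{thm:main} to move between the flows, and to back this up with a direct pointwise computation based on the Gauss equation and the Ruh--Vilms identity. I take an MCF soliton to be an immersion satisfying $H=\big(\lambda X+V\big)^{\perp}$ for a constant $\lambda$ and a constant vector $V$ (shrinker, expander or translator), possibly up to a rotation term. Equivalently, the MCF starting at $X$ moves by $X(t)=c(t)A(t)\,X\circ\varphi_t+b(t)$ with $A(t)$ an ambient isometry. I take a gradient HRF soliton to be a pair $(\rho,g)$ with a function $f$ such that
\[
\Ric_g-\alpha\, d\rho\otimes d\rho+\nabla^2 f=\lambda g,\qquad \tau_g(\rho)=d\rho(\nabla f).
\]

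\emph{MCF soliton $\Rightarrow$ HRF gradient soliton.} By Theorem~\ref{thm:main}(1), the Gauss map and induced metric of $X(t)$ solve the HRF. Self-similarity gives $g(t)=c(t)^2\varphi_t^*g$ and $\rho(t)=A(t)\cdot(\rho\circ\varphi_t)$. Since $A(t)$ acts isometrically on the (pseudo-)Grassmannian, $(\rho,g)$ is an HRF soliton in the self-similar sense. The generating vector field of $\varphi_t$ is the tangential part $(\lambda X+V)^{T}$. This is a gradient, namely $\nabla f$ with $f=\tfrac{\lambda}{2}\langle X,X\rangle+\langle V,X\rangle$, where $\langle\cdot,\cdot\rangle$ is the ambient (pseudo-)inner product.

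As a check, I will verify the two equations directly. Differentiating $f$ twice gives $\nabla^2 f=\lambda g+\langle h,(\lambda X+V)^\perp\rangle=\lambda g+\langle h,H\rangle$. The Gauss equation in flat $M$ reads $\Ric_{ij}=\langle H,h_{ij}\rangle-\sum_k\langle h_{ik},h_{jk}\rangle$. Under the identification $d\rho\cong h$, the normal metric has sign $\ep$, so $\sum_k\langle h_{ik},h_{jk}\rangle=\ep\,(d\rho\otimes d\rho)_{ij}$. Hence $\Ric+\ep\, d\rho\otimes d\rho=\langle H,h\rangle$, and with $\alpha=-\ep$ the first soliton equation follows after fixing the sign convention of $f$ (replacing $f$ by $-f$ if needed). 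For the second equation, the Ruh--Vilms identity gives $\tau_g(\rho)=\nabla^\perp H$. Differentiating $H=(\lambda X+V)^\perp$ gives $\nabla^\perp_i H=-h\big(e_i,(\lambda X+V)^T\big)$, up to the $\lambda X^\perp$ contributions, which cancel. This is $\pm d\rho(\nabla f)$, so the same sign convention gives the second equation.

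\emph{HRF gradient soliton $\Rightarrow$ MCF soliton.} This direction is the main obstacle, because $f$ is only abstract data and must be related to the position vector. Given a gradient soliton $(\rho,g)$, the HRF starting from it is $g(t)=c(t)^2\psi_t^*g$ and $\rho(t)=\rho\circ\psi_t$, where $\psi_t$ is generated by $\nabla f$; by uniqueness this is the HRF solution. Let $X(t)$ be the MCF reconstructed from it via Theorem~\ref{thm:main}(2), as described in Section~2.2. Its Gauss map and induced metric at time $t$ are those of the rescaled, reparametrized initial immersion $c(t)X\circ\psi_t$. I then need to argue that an immersion is determined by its Gauss map and induced metric up to a rigid motion of $M$, for instance by integrating $dX$ from the tangent frame determined by $\rho$ and $g$. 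This yields $X(t)=c(t)A(t)X\circ\psi_t+b(t)$, so $X$ is an MCF soliton. The delicate point is this rigidity step: $\rho$ fixes the tangent plane, $g$ fixes lengths, and to fix $dX$ up to a constant isometry I expect to need the Gauss--Codazzi compatibility from Corollary~\ref{c:main}(1).
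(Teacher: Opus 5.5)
Your forward direction is fine. Your single potential $f=\tfrac{\lambda}{2}\langle X,X\rangle+\langle V,X\rangle$, followed by $f\mapsto -f$, is the paper's Part~1 computation written as one formula; the sign flip does fix both equations at once, with HRF constant $-\lambda$. The gap is in the converse, at the step you flagged, and it is more than a missing compatibility condition.

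The claim that an immersion is determined by its Gauss map and induced metric up to a rigid motion is false. Suppose $X$ and $Y$ have the same $\rho$ and $g$. Then $dY=dX\circ O$ for a field $O$ of orientation-preserving $g$-isometries of $T\Sigma$. This $O$ is $\nabla^g$-parallel, because both $dX$ and $dY$ intertwine $\nabla^g$ with $\rho^*\bar{\nabla}^\top$. Nothing forces $O=\mathrm{id}$. The associated family of a minimal surface in $\mathbb{R}^3$ (locally, catenoid and helicoid) consists of non-congruent immersions with identical Gauss map and metric. It is a continuous family, so $O=\mathrm{id}$ at $t=0$ plus continuity in $t$ does not help. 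Gauss--Codazzi does not help either: $X(t)$ and $c(t)X\circ\psi_t$ are genuine immersions, so they satisfy it automatically. Hence ``this yields $X(t)=c(t)A(t)X\circ\psi_t+b(t)$'' is unjustified.

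Along your route you would need an extra argument that uses the flow equation itself. Write $X(t)=c(t)Z_t\circ\psi_t$, where $dZ_t=dX\circ O_t$ and $O_0=\mathrm{id}$. The MCF equation then expresses $O_t^{-1}\partial_t O_t$ as a combination of three operators: the shape operator of $Z_t$ in the direction $H_{Z_t}$, the identity, and $\nabla W_t$, where $W_t$ generates $\psi_t$ and is a multiple of $\nabla f$. All three are $g$-self-adjoint, while $O_t^{-1}\partial_t O_t$ is skew, so it vanishes and $Z_t=X+b(t)$. This also rules out rotation. A rigid motion $A(t)$ as in your formula would a priori only give a rotating soliton, which is outside the class in the corollary. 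Your route also needs completeness of $\nabla f$ and HRF uniqueness on non-compact $\Sigma$, which a pointwise statement should not require.

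The paper avoids flows entirely. Using $\Ric-\alpha\,d\rho\otimes d\rho=\langle H,A\rangle$ and $\tau_g(\rho)=\nabla^\perp H$, the soliton system becomes $\langle H,A\rangle+\nabla^2f=\lambda g$ and $\nabla^\perp H=A(\nabla f,\cdot)$. Define the ambient field $V:=H-dX(\nabla f)+\lambda X$. The Weingarten and Gauss formulas give
\[
\bar{\nabla}_{e_i}V=\bigl(-A_He_i+\nabla^\perp_iH\bigr)-\bigl(\nabla_i\nabla f+A(e_i,\nabla f)\bigr)+\lambda e_i .
\]
This vanishes after substituting $\nabla_i\nabla f=\lambda e_i-A_He_i$ and $\nabla^\perp_iH=A(\nabla f,e_i)$, so $V$ is constant. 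Projecting $H=V+dX(\nabla f)-\lambda X$ to the normal bundle gives $H+\lambda X^\perp=0$ (after translating the origin) if $\lambda\neq0$, and $H=V^\perp$ if $\lambda=0$. This constant-vector argument is the idea you are missing. It recovers the position-vector relation directly from an arbitrary potential $f$, with no uniqueness, completeness, or rigidity input.
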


The equivalence between MCF and HRF in Theorem~\ref{thm:main} also holds for a volume-preserving version. When the ambient space is a general Riemannian manifold or pseudo-Riemannian manifold, the Gauss map and the induced metric of the MCF or SMCF satisfies a harmonic-Ricci-type system with additional terms involving the curvature of the ambient manifold, see the Appendix.

The HRF was introduced by M\"uller~\cite{Muller2009, Muller2012} as a natural generalization of the Ricci flow, with some special cases previously studied by List~\cite{List2008} and Lott~\cite{Lott2007}. The local existence and uniqueness of HRF was established in~\cite{Muller2009}. Many techniques and important features of the Ricci flow carry over almost directly to the HRF, but only when the coupling constant $\alpha$ is positive (or more generally, when $\alpha(t)$ is a positive function). In fact, in some special situations, the HRF behaves less singular than the Ricci flow or the harmonic map heat flow. For example, the volume-preserving HRF on surfaces exists for all time if the coupling constant is sufficiently large~\cite{BR2017}.

On the other hand, HRF with negative coupling constant is much harder to study due to lack of monotonicity properties. In view of Theorem~\ref{thm:main}, this explains the key difference between the MCF in Euclidean spaces and SMCF in pseudo-Euclidean spaces.

Finally, as an application, we obtain a monotonicity formula for SMCF, which is a direct consequence of the monotonicity formula for HRF with pocitive coupling constant in~\cite{Muller2012}.

\begin{thm}\label{thm:monotonicity}
Let $X(t):\Si^n\to M$ be a MCF and $f$ solve the (intrinsic) adjoint heat equation under this flow:
\begin{equation} \label{eq:adjoint_heat}
    \partial_t f = -\Delta f + |\nabla f|^2 + \alpha |H|^2 .
\end{equation}
Then the $\mathcal{F}$-functional
\[ \mathcal{F}(t) := \int_\Si\(|\nabla f|^2-\alpha |H|^2\)e^{-f}dV_g \]
satisfies
\[ \frac{d}{dt}\mathcal{F} = \int_\Si\(2|\nabla^2 f - \alpha\<H, A\>|^2 + 2\alpha|\nabla H -\<A, \nabla f\>|^2\)e^{-f}dV_g, \]
provided the above integrals are finite.

In particular, along the SMCF where $\alpha=1$, $\mathcal {F}(t)$ is non-decreasing.
\end{thm}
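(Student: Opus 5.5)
We plan to reduce the statement to Müller's monotonicity formula for the harmonic-Ricci flow, using Theorem~\ref{thm:main} as the bridge. By Theorem~\ref{thm:main}, the induced metric $g(t)$ and Gauss map $\rho(t)$ of the MCF solve the HRF~\eqref{eq:hrf-gauss} with $\alpha=-\ep$. Müller's formula is stated for a flow of the form $\partial_t g=-2\Ric_g+2\alpha\,d\rho\otimes d\rho$, $\partial_t\rho=\tau_g(\rho)$. For the functional $\int(R_g-\alpha|d\rho|^2+|\nabla f|^2)e^{-f}dV_g$, with $f$ solving $\partial_t f=-\Delta f+|\nabla f|^2-R_g+\alpha|d\rho|^2$, it gives
\[
\frac{d}{dt}\int(R-\alpha|d\rho|^2+|\nabla f|^2)e^{-f}
=\int\bigl(2|\Ric-\alpha\, d\rho\otimes d\rho+\nabla^2 f|^2+2\alpha|\tau_g\rho-\<d\rho,\nabla f\>|^2\bigr)e^{-f}.
\]
The proof therefore amounts to translating every Müller quantity into submanifold quantities and checking that the two formulations agree.

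\textbf{Dictionary.} With the Grassmannian carrying the metric inherited from the ambient (pseudo-)Euclidean structure, we have $|d\rho|^2=\ep|A|^2$ and $d\rho\otimes d\rho(\cdot,\cdot)=\ep\<A(\cdot,e_i),A(\cdot,e_i)\>$. The Gauss equation in flat $M$ gives
\[
\Ric_{jk}=\<H,A_{jk}\>-\sum_i\<A_{ji},A_{ik}\>,
\]
so $\Ric-\alpha\,d\rho\otimes d\rho=\<H,A\>$ because $-\alpha\ep=1$. Tracing, $R-\alpha|d\rho|^2=|H|^2$, using the sign conventions for spacelike submanifolds with a timelike normal bundle. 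The Ruh--Vilms identity, in its pseudo-Euclidean version as in the proof of Theorem~\ref{thm:main}, identifies $\tau_g\rho$ with $\nabla^\perp H$. It also identifies $\<d\rho,\nabla f\>$ with $\<A,\nabla f\>$, meaning $A(\nabla f,\cdot)$.

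\textbf{Matching the two formulations.} The Müller integrand has the form $2|\Ric-\alpha\,d\rho\otimes d\rho+\nabla^2f|^2$, while the target integrand is $2|\nabla^2 f-\alpha\<H,A\>|^2$. We will absorb this discrepancy by normalizing $f$, $\alpha$ and the integrand signs consistently; the adjoint equation~\eqref{eq:adjoint_heat} and $\mathcal F$ are already written with $\alpha|H|^2$ in place of $R-\alpha|d\rho|^2$. The second term, $2\alpha|\tau\rho-\<d\rho,\nabla f\>|^2$, becomes $2\alpha|\nabla H-\<A,\nabla f\>|^2$, where norms of normal-valued tensors are taken with the (possibly indefinite) normal metric multiplied by $\ep$. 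This makes it nonnegative exactly when Müller's term is.

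\textbf{Direct computation if the quotation does not match.} If the conventions do not reconcile cleanly, we will derive the formula ourselves. We compute $\frac{d}{dt}\mathcal F$ using
\[
\partial_t g=-2\<H,A\>,\qquad \partial_t dV=-|H|^2dV,\qquad \partial_t|H|^2=\Delta|H|^2-2|\nabla H|^2+2|\<H,A\>|^2,
\]
the last being the standard MCF evolution equation, with signs adjusted by $\ep$. We will use that $e^{-f}dV$ evolves by $-\Delta(e^{-f})dV$, and then integrate by parts with the Bochner formula. The Codazzi equation is needed for terms such as $\operatorname{div}\<H,A\>=\<H,\nabla H\>+\ldots$, and it plays the role of the contracted Bianchi identity in Perelman's computation. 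Completing squares then yields the stated integrand. The main obstacle is this sign and convention bookkeeping, especially the indefinite normal metric in the SMCF case and the sign of $\alpha$.

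Finally, when $\alpha=1$ (SMCF), both terms of the integrand are nonnegative, so $\mathcal F$ is non-decreasing. The finiteness hypothesis justifies the integrations by parts in the non-compact case.
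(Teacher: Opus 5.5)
Your route is the paper's route. You use Theorem~\ref{thm:main} to regard $(g(t),\rho(t))$ as an HRF with $\alpha=-\ep$. You quote M\"uller's identity for $\int_\Si(|\nabla f|^2+R-\alpha|d\rho|^2)e^{-f}dV_g$ under $\partial_t f=-\Delta f+|\nabla f|^2-(R-\alpha|d\rho|^2)$. Then you translate through the Gauss equation and Ruh--Vilms.

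The gap is your ``matching'' paragraph, and as written it cannot work. Neither $\alpha$ nor $f$ can be ``normalized'': $\alpha=-\ep$ is forced by Theorem~\ref{thm:main}, and $f$ is fixed by \eqref{eq:adjoint_heat}. The discrepancy is purely a question of which pairing on $N\Si$ the statement uses. Suppose $|H|^2$ and $\langle H,A\rangle$ are ambient pairings $\langle\cdot,\cdot\rangle_M$, as in your dictionary. Then for $\ep=1$ there is no discrepancy at all, since $-\alpha=1$. For $\ep=-1$, however, the statement's $-\alpha|H|^2$ and $-\alpha\langle H,A\rangle$ are the negatives of $R-\alpha|d\rho|^2$ and $\Ric-\alpha\,d\rho\otimes d\rho$, so the substitution does not produce the stated formula. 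Your remark that $\mathcal F$ and the adjoint equation are ``already written with $\alpha|H|^2$ in place of $R-\alpha|d\rho|^2$'' also has a sign slip. $\mathcal F$ carries $-\alpha|H|^2$ in that slot, and the adjoint equation carries $+\alpha|H|^2$ in place of $-(R-\alpha|d\rho|^2)$.

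The fix is to read every normal quantity in the statement with the positive-definite metric $\langle\cdot,\cdot\rangle_+:=\ep\langle\cdot,\cdot\rangle_M$ on $N\Si$. These are the index sums of Section~2, e.g.\ $|H|^2=\sum_\al(H^\al)^2$. This is the convention you already adopt for the second term. It is also the Riemannian metric on $G$ that M\"uller's formula needs, since for $\ep=-1$ the Pl\"ucker-induced form on $G$ is negative definite.

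This reading is forced. Under the flow $\partial_t dV_g=-\langle H,H\rangle_M\,dV_g$, so $e^{-f}dV_g$ evolves by $-\Delta(e^{-f})\,dV_g$, as you yourself use, only if $\alpha|H|^2=-\langle H,H\rangle_M$. With $\ep=-\alpha$, the Gauss equation and Ruh--Vilms give
\[
R-\alpha|d\rho|^2=\langle H,H\rangle_M=-\alpha|H|_+^2,\qquad
\Ric-\alpha\,d\rho\otimes d\rho=\langle H,A\rangle_M=-\alpha\langle H,A\rangle_+,
\]
\[
|\tau_g\rho-\langle d\rho,\nabla f\rangle|^2=|\nabla^\perp H-A(\nabla f,\cdot)|_+^2 .
\]
Substituting these three identities into M\"uller's functional, adjoint equation and evolution identity gives the theorem verbatim, with no renormalization. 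Your fallback direct computation is then unnecessary.

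Finally, finiteness of the integrals does not by itself kill boundary terms in the non-compact case. Like the paper, you have to assume that the integrations by parts are justified.
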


\

The rest of the paper is organized as follows. First we rewrite the Gauss-Codazzi equations by differentials of the Gauss map in Section 2. Then we prove the main Theorem~\ref{thm:main} and Corollary~\ref{c:main} in Section 3. The proof of Corollary~\ref{c:soliton} and Theorem~\ref{thm:monotonicity} is given in Section 4. In the Appendix, we show a volume-preserving version of Theorem~\ref{thm:main} and derive the corresponding harmonic-Ricci-type equations in a curved ambient space.

\section{Gauss maps and structure equations of submanifolds}

Let $\Si$ be an $n$-dimensional manifold, $M$ be an $n+k$ dimensional ambient space, which is either Euclidean space $\R^{n+k}$ or pseudo-Euclidean space $\R^{n,k}$.

Suppose $X:\Si\to M$ is a (spacelike) immersed submanifold. Denote its image by $\b{\Sigma} = X(\Sigma)$ and its normal bundle by
 $$ N\Sigma = X^*(T\b{\Sigma}^\bot).$$
Note that the induced metric on $N\Si$ is positive definite if $M=\R^{n+k}$ and negative definite if $M=\R^{n,k}$.

Let $\{e_i\}_{i=1}^n$ be a set of local orthonormal frame of $T\Si$ and $\{\de^i\}$ be its dual. Let $\{\bar{e}_i=dX(e_i)\}$ be the corresponding orthonormal frame of $X^*(T\bar{\Si})$ and $\{\bar{\de}^i\}$ be its dual. Let $\{\nu_{\al}\}_{\al=1}^k$ be local orthonormal frame of $N\Si$. The pull-back map
\begin{equation}\label{e:pull-back}
    X^*:T_{X(x)}^*\bar{\Si}\to T_x^*\Si, \quad \{\bar{\de}^i\}_{i=1}^n \mapsto \{\de^i\}_{i=1}^n
\end{equation}
maps a section of $X^*(T^*\bar{\Si})$ isometrically to a section of $T^*\Si$.

Set the sign indicator $\ep=+1$ if $M=\R^{n+k}$ is Euclidean space and $\ep=-1$ if $M=\R^{n,k}$ and $X$ is spacelike. Recall that since $M$ is flat, the Gauss equation reads
\begin{equation}\label{e:gauss}
  R_{ijkl} = \ep(h^\al_{ik}h^\al_{jl} - h_{il}^\al h_{jk}^\al).
\end{equation}
Taking trace,
\begin{equation}\label{e:gauss-ricci}
  \Ric_{ij}=\ep(H^\al h_{ij}^\al - h_{ik}^\al h_{jk}^\al).
\end{equation}
Taking another trace yields
\begin{equation}\label{e:gauss-scalar}
    R=\ep(|H^\al|^2-h_{ik}^\al h_{ik}^\al).
\end{equation}
The Codazzi equation is
\begin{equation}\label{e:codazzi}
  \n_i h_{jk} = \n_j h_{ik}.
\end{equation}
The Ricci equation is
\begin{equation}\label{e:ricci}
    R_{ij\al\be}= h_{ik}^\al h_{jk}^\be - h_{jk}^\al h_{ik}^\be.
\end{equation}

Now, consider the Gauss map $\rho:\Si \to G$ of $X$, where the Grassmannian manifold is defined by
\[ G:= \{ \text{oriented k-dimensional (spacelike) linear subspaces in } M \}. \]
The Pl\"ucker embedding $i: G \hookrightarrow \wedge^{n}M$ gives us a canonical metric on $G$ which induced a canonical connection  $\nabla^G$. When $k=1$ and $\Si$ is a hypersurface, $G$ is the standard sphere $\mathbb{S}^n$ if $M$ is Euclidean, while $G$ is the standard hyperbolic space $\mathbb{H}^n$ if $M$ is the Minkowski space $\mathbb{R}^{n,1}$.

Denote the canonical tautological bundle and cotautological bundle over $G$ by $\G^{\top}$ and $\G^{\perp}$, with canonical connections $\b{\nabla}^{\top}$  and $\b{\nabla}^{\bot}$, respectively. There is a bundle isomorphism (see for example \cite{Song}),
\begin{equation}\label{eq:bun-iso-totau}
   (TG, \nabla^{G}) \cong (\G^{\perp} \otimes \G^{\top *}, \b{\nabla}^{\bot}\oplus\b{\nabla}^{\top}).
\end{equation}  
Since $\rho^*\G^\top = X^*T\b{\Si}$ and $\rho^*\G^\bot = X^*N\b{\Si}$, we have an canonical bundle isometry
$$ \rho^*(TG, \nabla^G) \cong X^*(N\b{\Sigma}\otimes T^*\b{\Sigma}, \b{\nabla}^{\bot}\oplus\b{\nabla}^{\top}), $$
such that  the differential
$$ d\rho \in \Gamma(\rho^*TG\otimes T^*\Sigma) \cong \Gamma(N\Sigma\otimes X^*(T^*\b{\Sigma})\otimes T^*\Sigma).$$
can be identified with the second fundamental form $A$ by
\begin{equation}\label{eq:sec-gauss-0}
A=X^*d\rho.
\end{equation}
More explicitly, if we write $A= h_{ij}^\al \nu_{\al}\otimes \de^i\otimes \de^j $ and $d\rho = \nabla_i\rho_{j}^\al \nu_{\al}\otimes \de^i\otimes \bar{\de}^{j}$ in local frames~\eqref{e:pull-back}, then
\[ h_{ij}^\al =  \nabla_i\rho_{j}^\al. \]

Consequently, the Gauss equations (\ref{e:gauss}-\ref{e:gauss-scalar}) becomes
\begin{eqnarray}
  R_{ijkl} &=& \ep(\nabla_i\rho_{k}^\al\nabla_j\rho_{l}^\al - \nabla_i\rho_{l}^\al\nabla_j\rho_{k}^\al), \label{e:gauss-1} \\
  \Ric_{ij} &=& \ep(\nabla_k\rho_{k}^\al\nabla_i\rho_{j}^\al - \nabla_i\rho_{k}^\al\nabla_j\rho_{k}^\al), \label{e:gauss-ricci-1} \\
  R &=& \ep(|\nabla_k\rho_{k}^\al|^2 - \nabla_i\rho_{k}^\al\nabla_i\rho_{k}^\al). \label{e:gauss-scalar-1}
\end{eqnarray}
The Codazzi equation (\ref{e:codazzi}) just becomes the symmetry of the Hessian of $\rho$, i.e.
\begin{equation}\label{e:codazzi-rho}
  \n_i\n_j \rho = \n_j \n_i \rho.
\end{equation}
It follows that
\begin{equation}\label{e:tau-rho}
  \n H = \tau_g(\rho), 
\end{equation}
where $\tau_g(\rho)$ is the tension field of $\rho$. The Ricci equation (\ref{e:ricci}) becomes
\begin{equation}\label{e:ricci-1}
   R_{ij\al\be}=\n_i\rho_{k}^\al \n_j\rho_{k}^\be - \n_j\rho_{k}^\al \n_i\rho_{k}^\be.
\end{equation}

\section{Equivalence between MCF and HRF}

\subsection{From MCF to HRF}

Let $X:[0,T)\times \Si \to M$ be a MCF satisfying
\begin{equation}\label{e:mcf}
  \begin{cases}
  \p_t X = H,\\
  X(0)=X_0.
\end{cases}
\end{equation}

For $t\in [0, T)$, let $\rho(t):\Si \to G$ be the Gauss map of $X(t)$. It is well-known~\cite{Wang2003} that under the MCF (\ref{e:mcf}), $\rho$ satisfies the harmonic map heat flow
\begin{equation}\label{e:rho}
  \p_t \rho = \tau_{g}(\rho),
\end{equation}
where $g(t)$ is the time-dependent pull-back metric. Moreover, $g(t)$ satisfies the evolution equation (see for example~\cite{Hui11984})
\begin{equation}\label{e:g}
  \p_t g = -2\<H, A\>.
\end{equation}
Note that if $M$ is pseudo-Euclidean and $X_0$ is spacelike, then the corresponding MCF $X$ is always spacelike~\cite{Ecker1997}.
Denoting the tensor
\[ d\rho\otimes d\rho=\n_i\rho_{k}^\al \n_j\rho_{k}^\al, \]
and inserting the Gauss equation (\ref{e:gauss-ricci-1}) into \eqref{e:g}, we have
\begin{equation}\label{e:g1}
  \p_t g = -2 \Ric - 2\ep~d\rho\otimes d\rho.
\end{equation}

Combining \eqref{e:rho} and \eqref{e:g1}, we get the following proposition which proves item (1) of Theorem~\ref{thm:main}.

\begin{prop}\label{p:MCF2HRF}
The Gauss map $\rho(t)$ and the pull-back metric $g(t)$ of MCF~\eqref{e:mcf} satisfy
\begin{equation}\label{e:mcf-gauss}
  \begin{cases}
    \p_t \rho = \tau_{g}(\rho),\\
    \p_t g = -2 Ric - 2\ep~d\rho\otimes d\rho,
  \end{cases}
\end{equation}
which is exactly the HRF with coupling constant $\al=-\ep$.
\end{prop}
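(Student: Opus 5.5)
The proposition has two parts: the evolution of $\rho$ and the evolution of $g$. I will prove each separately and then combine them.

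\emph{Step 1: the harmonic map heat flow for $\rho$.} This is Wang's parabolic Ruh--Vilms theorem \cite{Wang2003}; I will recheck it in the present notation. Through the bundle isometry \eqref{eq:bun-iso-totau}, $\p_t\rho$ is the section of $N\Si\otimes X^*T^*\b\Si$ obtained by differentiating the tangent plane $dX(T\Si)$ in time. Its component pairing $\bar e_j$ with $\nu_\al$ is $\<\p_t\bar e_j,\nu_\al\>=\<\nabla^\perp_{e_j}H,\nu_\al\>$, since $\p_t dX(e_j)=d(\p_tX)(e_j)=\p_{e_j}H$ and the tangential part of $\p_{e_j}H$ does not change the plane. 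One technical point: the frame $e_j$ evolves with $g(t)$, but this only changes the basis of the tangent plane, not the plane itself, so it does not affect $\p_t\rho$. Hence $\p_t\rho=\nabla H$ under the identification. Combining this with \eqref{e:tau-rho}, which rests on the Codazzi symmetry \eqref{e:codazzi-rho} through $\tau_g(\rho)^\al_j=\n_i\n_i\rho^\al_j=\n_j\n_i\rho^\al_i=\n_j H^\al$, gives $\p_t\rho=\tau_g(\rho)$.

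\emph{Step 2: the metric.} Write $g_{ij}=\<\p_iX,\p_jX\>$ in the ambient (pseudo-)metric. Differentiating in time gives $\p_t g_{ij}=\<\p_iH,\p_jX\>+\<\p_iX,\p_jH\>$. Since $H$ is normal, $\<\p_iH,\p_jX\>=-\<H,\p_i\p_jX\>=-\<H,A_{ij}\>$. Hence $\p_tg=-2\<H,A\>$, which is \eqref{e:g}. Here the pairing $\<H,A\>$ is taken with the normal metric, which carries the sign $\ep$. Written in the orthonormal normal frame, this reads $\p_tg_{ij}=-2\ep H^\al h^\al_{ij}$, where I use $\<\nu_\al,\nu_\be\>=\ep\de_{\al\be}$. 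This sign bookkeeping is the main place to be careful. The convention must match the one implicit in \eqref{e:gauss-ricci}, where $\Ric_{ij}=\ep(H^\al h^\al_{ij}-h^\al_{ik}h^\al_{jk})$. I will also invoke \cite{Ecker1997} to guarantee that the flow stays spacelike, so that $\ep$ is constant in time.

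\emph{Step 3: substitute.} From \eqref{e:gauss-ricci-1}, $\ep H^\al h^\al_{ij}=\Ric_{ij}+\ep\,\n_i\rho^\al_k\n_j\rho^\al_k$. Substituting this into Step 2 gives $\p_tg=-2\Ric-2\ep\, d\rho\otimes d\rho$, which is the HRF with $\al=-\ep$.

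The main obstacle is making the sign conventions of $\<H,A\>$, of $|d\rho|^2$, and of the Gauss equation consistent in the pseudo-Euclidean case. For the $\rho$-equation, the only subtle point is identifying $\p_t\rho$ with $\nabla^\perp H$, which is the step the earlier paper \cite{Wang2003} handles.
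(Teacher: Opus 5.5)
Your proposal is correct and follows the paper's route. The paper likewise takes $\p_t\rho=\tau_g(\rho)$ from Wang~\cite{Wang2003} and $\p_t g=-2\<H,A\>$ from~\cite{Hui11984}, keeps the flow spacelike via~\cite{Ecker1997}, and substitutes the traced Gauss equation~\eqref{e:gauss-ricci-1}; you additionally re-derive those two cited evolution equations and make the $\ep$-sign bookkeeping explicit (via $\<\nu_\al,\nu_\be\>=\ep\de_{\al\be}$ and $d\rho\otimes d\rho=\n_i\rho^\al_k\n_j\rho^\al_k$), and that bookkeeping checks out.
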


\subsection{Reconstruction of MCF from HRF}

Next we show that given an initial immersion $X_0:\Sigma\to M$ with Gauss map $\rho_0$ and induced metric $g_0$, we may first solve the HRF \eqref{e:mcf-gauss} with initial data $\rho_0$ and $g_0$, and then reconstruct a solution $X(t)$ to the MCF~\eqref{e:mcf} with initial data $X_0$.

The standard verification of this correspondence typically relies on demonstrating that the Gauss-Codazzi integrability conditions are preserved along the HRF via the maximum principle, followed by a reconstruction of the immersion. While this analytic approach involves intricate curvature tensor calculations (see, e.g., \cite{Zhang2009}), we provide a more transparent and geometric construction that bypasses these computations by directly reconstructing the immersion through parallel transport in the space-time bundles.

Let $(\rho, g)$ be a solution to the HRF~\eqref{e:mcf-gauss} on the time interval $I=[0,T)$ with initial data $(\rho_0,g_0)$. We may define a Riemannian metric $\t{g}=dt^2+g(t)$ on the product space $\t{\Si}=\Si\times I$. Let $\t{\n}$ be the Levi-Civita connection induced by $\t{g}$. The tangent bundle $T\t{\Si}$ splits into a direct product $\mathcal{H}\oplus \mathcal{I}$, where
$$\mathcal{H}=\{u\in T\t{\Si}|dt(u)=0\}$$
is the `spatial' tangent bundle and $\mathcal{I}$ is the trivial line bundle spanned by $\p_t$. Then $\t{\n}$ restricts to a `spatial' connection $\n=\pi_{\mathcal{H}}\circ\t{\n}$ on $\mathcal{H}$, which coincides with the Levi-Civita connection of $g(t)$ at each time slice $\Si\times\{t\}$. For more details on the geometry of time-dependent bundles, we refer to Chapter 2 of Baker~\cite{Baker2010}.

On the other hand, there is a pull-back bundle $\rho^*\G^\top$ on $\t{\Si}$ with pull-back metric $\rho^*\b{g}$ and pull-back connection $\rho^*\b{\n}^\top$, where $\b{g}$ and $\b{\n}^\top$ is the canonical metric and connection on $\G^\top$ induced by the standard one on $M$.

The key step of our proof is to construct a bundle isometry between $\mathcal{H}$ and $\rho^*\G^\top$, which plays the role of the soldering map $dX$.

\begin{lemma}\label{l:bundle-isometry}
There exists a bundle isometry
 $$\Phi: \mathcal{H} \to \rho^* \G^{\top} $$
such that
\begin{equation}\label{e:preserve-connection}
  \nabla_{\partial_t} = \Phi^{*}(\rho^{*}\b{\nabla}^{\top}_{\partial_t}).
\end{equation}
\end{lemma}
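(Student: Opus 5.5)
We construct $\Phi$ by prescribing it at $t=0$ and transporting it in time along the integral curves of $\p_t$, using on each side the $t$-component of the relevant connection. At $t=0$ we take $\Phi_0 := dX_0 : T\Si \to X_0^*T\b{\Si} = \rho_0^*\G^\top$. Since $g_0$ is the pull-back metric, this is an isometry.

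For each $x\in\Si$, consider the curve $t\mapsto(x,t)$ in $\t\Si$. Along it, $\mathcal{H}$ carries the connection $\n_{\p_t}$ and $\rho^*\G^\top$ carries $\rho^*\b\n^\top_{\p_t}$. For $u\in\mathcal{H}_{(x,0)}$ we let $U(t)$ be the $\n_{\p_t}$-parallel section along the curve with $U(0)=u$. We then let $V(t)$ be the $\rho^*\b\n^\top_{\p_t}$-parallel section with $V(0)=\Phi_0 u$, and set $\Phi_{(x,t)}U(t):=V(t)$. Both transports are linear isomorphisms solving linear ODEs with coefficients smooth in $(x,t)$, so $\Phi$ is a smooth bundle map. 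By construction it intertwines the two $t$-derivatives: if $U$ is a section of $\mathcal{H}$, expanding it in a $\n_{\p_t}$-parallel frame $E_i$ gives
\[ \Phi(\n_{\p_t}U)=\p_t(U^i)\,\Phi E_i=\rho^*\b\n^\top_{\p_t}(\Phi U), \]
which is \eqref{e:preserve-connection}.

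The remaining point, and the main one, is that $\Phi$ is an isometry, which requires both connections to be metric in the $t$-direction. For $\rho^*\b\n^\top$ this is immediate, since $\b\n^\top$ is the metric projection connection on the tautological bundle. For $\n_{\p_t}=\pi_{\mathcal{H}}\t\n_{\p_t}$ on $\mathcal{H}$ with the metric $g(t)$, we check it directly. Since $\t g=dt^2+g(t)$ is a Riemannian metric on $\t\Si$, $\t\n$ is metric. For $U,W\in\Gamma(\mathcal H)$,
\[ \p_t\,\t g(U,W)=\t g(\t\n_{\p_t}U,W)+\t g(U,\t\n_{\p_t}W)=g(\n_{\p_t}U,W)+g(U,\n_{\p_t}W), \]
because $W\perp\p_t$ and so the $\mathcal I$-components drop out. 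Hence the $\n_{\p_t}$-parallel transport on $\mathcal H$ preserves $g(t)$. Both transports therefore preserve the inner products, and $\Phi_0$ is an isometry, so $\Phi$ is an isometry at all times.

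In the Baker-type formalism one would usually define $\n_{\p_t}U=[\p_t,U]+\tfrac12\p_t g(U,\cdot)^\sharp$. The Koszul formula for $\t g$ shows this coincides with $\pi_{\mathcal H}\t\n_{\p_t}$, and metricity can also be read off from that formula. Smoothness in $x$ follows from smooth dependence of ODE solutions on parameters. No use of the HRF equations is needed at this step; they enter only later, when one shows that $\Phi$ also intertwines the spatial connections.
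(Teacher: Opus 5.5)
Your proposal is correct and follows essentially the paper's construction: take $\Phi(0)=dX_0$ and extend it along each curve $t\mapsto(x,t)$ by sending $\nabla_{\partial_t}$-parallel frames of $\mathcal{H}$ to $\rho^*\bar\nabla^{\top}_{\partial_t}$-parallel frames of $\rho^*\mathscr{G}^{\top}$. The only difference is that you explicitly check that both $t$-connections are metric, which is what keeps $\Phi$ an isometry for $t>0$; the paper leaves this implicit in its ``clearly''.
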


\begin{proof}
Let $\{\partial_i\}_{i=1}^n$ be a local frame of $\mathcal{H}$. Since the spatial connection $\n$ is the restriction of the Levi-Civita connection $\t{\n}$, the connection coefficients along the $t$-direction are given by
\begin{equation}\label{e:connection-int}
\nabla_{t} \partial_i
= \Gamma_{ti}^j\partial_j
= \frac{1}{2}\t{g}^{jl}\bigl(\partial_t \t{g}_{il} + \partial_i \t{g}_{tl} - \partial_l \t{g}_{ti}\bigr)\partial_j
= \frac{1}{2}g^{jl}\partial_t g_{il}\,\partial_j.
\end{equation}

On the other hand, given an adapted orthonormal local frame
$$\{e_1,\dots,e_n;\nu_1,\dots,\nu_k\}$$
on $\rho^*\G^{\top}\oplus \rho^*\G^{\bot}$, we may calculate the connection coefficients of $\rho^*\b{\n}^\top$ as follows. Writing
$$\rho(t)=e_1\wedge\cdots\wedge e_n,$$
we have
\begin{align*}
\partial_t\rho
&= \bar\nabla_t e_1\wedge\cdots\wedge e_n + \cdots + e_1\wedge\cdots\wedge \bar\nabla_t e_n \\
&= \langle \bar\nabla_t^{\perp} e_i,\nu_\alpha\rangle\, \nu_\alpha\otimes e_i^*.
\end{align*}
In the last identity, we used the bundle identification
$$TG=\G^{\perp}\otimes (\G^{\top})^*.$$
Therefore,
$$\partial_t\rho(e_i)=\bar\nabla_t^{\perp}e_i.$$
It follows that
\begin{equation}\label{e:connection-ext}
\bar\nabla^{\top}_{t}e_i
= \bar\Gamma_{ti}^j e_j
= \bar\nabla_t e_i - \partial_t\rho(e_i).
\end{equation}

Now we can construct the bundle isometry by matching parallel frames on $\mathcal{H}$ and $\rho^*\G^\top$.  Given any point $x_0\in\Sigma$, let $\{x^i\}$ be the local normal coordinates with respect to $g_0$, and choose an adapted local frame along the curve $\gamma(t)=(x_0,t), t\in I$ such that
\[ \varepsilon_i(0):=\partial_i, \qquad \bar e_i(0) = e_i(0) =dX_0(\partial_i).\]
Next we may parallel transport $\varepsilon_i(0)$ in $\mathcal{H}$ with respect to the intrinsic connection $\n$, and parallel transport $e_i(0)$ in $\rho^*\G^\top$ with respect to the extrinsic connection $\rho^*\b{\n}^\top$. Then the bundle isometry is given by sending $\varepsilon_i(t)$ to $e_i(t)$.

More precisely, let the connection coefficients $\Gamma_{ti}^j$ and $\b{\Gamma}_{ti}^j$ be given by \eqref{e:connection-int} and \eqref{e:connection-ext}, respectively. Write
$$\varepsilon_i(t)=a_i^j(t)\partial_j, \qquad \bar e_i(t)=b_i^j(t)e_j(t),$$
where the coefficients $a_i^j$ and $b_i^j$ solve the ODEs
$$\nabla_t\varepsilon_i(t)=\left(\partial_t a_i^j + a_i^k \Gamma_{tk}^j\right)\partial_j=0,$$
and
$$\b{\nabla}_t^{\top}\bar e_i(t)=(\partial_t b_i^j + b_i^k\bar\Gamma_{tk}^j)e_j=0,$$
with initial values
$$a_i^j(0)=b_i^j(0)=\delta_i^j.$$
Then the desired bundle isometry is given by
$$\Phi(t)=\varepsilon_i(t)^*\otimes \bar e_i(t):\mathcal{H}|_t\to \rho^*\G^\top|_t, \quad \forall t\in I,$$
which clearly satisfies (\ref{e:preserve-connection}).
\end{proof}

With the bundle isometry $\Phi$, we can now recover the mean curvature vector by
\[ \tr_{\Phi} d\rho=g^{ij}\Phi_j^k \n_i\rho_{k}^{\alpha}.\]
Then we solve the ODE
\begin{equation}\label{e:ODE}
  \begin{cases}
    \partial_t\tilde X=\tr_{\Phi}d\rho, \\
    \tilde X(0)=X_0.
  \end{cases}
\end{equation}

\begin{lemma}\label{l:HRF2MCF}
  The solution $\tilde X(t)$ of ODE~\eqref{e:ODE} is exactly the unique solution to the MCF~\eqref{e:mcf} starting from $X_0$.
\end{lemma}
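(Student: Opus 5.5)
The plan is to use the uniqueness hypotheses of Theorem~\ref{thm:main} to bring in the genuine MCF, and to identify $\Phi$ with its differential. Proving directly that $d\tilde X(t)=\Phi(t)$ would require the Gauss equation for $g(t)$ along the HRF, and that is not yet available. So let $X(t)$ be the unique solution of the MCF~\eqref{e:mcf} with $X(0)=X_0$. By Proposition~\ref{p:MCF2HRF}, its Gauss map and induced metric solve the HRF~\eqref{e:mcf-gauss} with data $(\rho_0,g_0)$. By uniqueness of the HRF, they coincide with the given $(\rho(t),g(t))$. Consequently:
\begin{itemize}
\item $dX(t)$ is an isometry from $(T\Si,g(t))=\mathcal{H}|_t$ onto $X(t)^*T\bar\Si=\rho^*\G^\top|_t$;
\item by \eqref{eq:sec-gauss-0}, $A=X^*d\rho$, i.e.\ $h^\al_{ij}=\n_i\rho^\al_{k}\,(dX)^k_j$.
\end{itemize}

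The key step is to show $dX(t)=\Phi(t)$. I will check that $dX$ also intertwines $\n_{\p_t}$ with $\rho^*\b\n^\top_{\p_t}$, and then argue as follows. Both $\Phi$ and $dX$ agree at $t=0$, where both equal $dX_0$. Both send $\n$-parallel frames along $\gamma(t)=(x_0,t)$ to $\rho^*\b\n^\top$-parallel frames. Hence uniqueness for the linear ODEs defining $a_i^j,b_i^j$ in Lemma~\ref{l:bundle-isometry} gives $dX=\Phi$.

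For the intertwining, differentiate in the ambient space:
\[
\p_t\bigl(dX(\p_i)\bigr)=\p_i H .
\]
Its tangential part, computed by the Weingarten formula, is $-\<H,h_{ij}\>g^{jl}\,dX(\p_l)$. On the other hand, by \eqref{e:connection-ext} we have
\[
\rho^*\b\n^\top_t\bigl(dX(\p_i)\bigr)=\bigl(\p_t\, dX(\p_i)\bigr)^\top .
\]
Using \eqref{e:connection-int} and $\p_t g=-2\<H,A\>$ from \eqref{e:g}, we get
\[
dX(\n_t\p_i)=\tfrac12 g^{jl}\p_t g_{il}\,dX(\p_j)=-\<H,h_{il}\>g^{lj}dX(\p_j).
\]
These two expressions agree, so $dX\circ\n_t=\rho^*\b\n^\top_t\circ dX$, which is exactly \eqref{e:preserve-connection} with $dX$ in place of $\Phi$.

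Once $\Phi=dX$, the definition gives
\[
\tr_\Phi d\rho=g^{ij}\Phi^k_j\n_i\rho^\al_k\,\nu_\al=g^{ij}h^\al_{ij}\nu_\al=H(X(t)).
\]
Thus $\tilde X$ and $X$ satisfy the same trivial ODE $\p_tY=H(X(t))$ pointwise in $x$, with the same initial value $X_0$. Integrating in $t$ gives $\tilde X\equiv X$, which is the claim.

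The main obstacle is the identification $\Phi=dX$. One must be careful that $\rho^*\b\n^\top_t$ is the tangential projection of the ambient derivative, which is the content of \eqref{e:connection-ext}. The sign bookkeeping in the pseudo-Euclidean case, where $\<\cdot,\cdot\>$ on normals is negative definite, must also be checked. Since Weingarten and \eqref{e:g} are written with the same pairing, the signs should cancel uniformly in $\ep$.
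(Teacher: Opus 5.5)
Your proposal is correct and follows the paper's proof. You bring in the genuine MCF and use uniqueness of the HRF to identify its Gauss map and metric with $(\rho,g)$. You then show $dX$ satisfies the same $t$-direction condition \eqref{e:preserve-connection} as $\Phi$, so $\Phi=dX$ by ODE uniqueness, and conclude $\tr_\Phi d\rho=H$, hence $\tilde X=X$. The one place you go further is the $t$-component of the intertwining: the paper asserts it by identifying $\n$ with the induced connection, while you verify it via the Weingarten formula and $\p_t g=-2\<H,A\>$, a check that genuinely uses that the velocity $H$ is normal.
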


\begin{proof}
Suppose $X(t)$ is the unique solution to the MCF~\eqref{e:mcf} starting from $X_0$. Then by Proposition~\ref{p:MCF2HRF}, its Gauss map and induced metric satisfy the HRF~\eqref{e:mcf-gauss}. By the assumption of uniqueness of HRF, it must coincide with $(\rho(t), g(t))$.


In view of the correspondence between $X(t)$ and $(\rho(t), g(t))$, we may identify $\rho^*\G^{\top}$ with $X^*T\bar{\Si}$. Moreover, we have the bundle isometry
$dX:\mathcal{H}\to X^*T\bar{\Si}.$
The spatial connection $\nabla$ on $\mathcal{H}$ is exactly the pull-back of the induced connection on the tangent bundle $T\bar{\Si}$, which is in turn identified with $\rho^*\b{\nabla}^{\top}$.
Therefore, the soldering map $dX$ gives a bundle isometry
$$dX:\mathcal{H}\longrightarrow \rho^*\G^{\top}$$ 
which preserves the connections, i.e.
$$\nabla  = dX^{*}(\rho^{*}\b{\nabla}^{\top}).$$
In particular, $dX$ preserves the parallel transport in the $t$-direction. Namely, it satisfies the same equation (\ref{e:preserve-connection}) as $\Phi$. Since $dX(0)=\Phi(0)=dX_0$, the uniqueness of ODE implies that 
\[ \Phi=dX \] 
for all $(x,t)\in \Si\times I$. It follows that $\tr_\Phi d\rho$ is just the mean curvature vector $H$ of $X$. Therefore, the map $\t{X}(t)$ and $X(t)$ satisfy the same ODE~\eqref{e:ODE} and must be identical.
\end{proof}

Now the main Theorem~\ref{thm:main} is a direct consequence of Proposition~\ref{p:MCF2HRF} and Lemma~\ref{l:HRF2MCF}. So we omit the proof.

\begin{proof}[Proof of Corollary~\ref{c:main}]
Item (1) is a direct consequence of Theorem~\ref{thm:main}. Item (2) follows from the long-time existence of SMCF~\cite{Ecker1997, LL2021} and Theorem~\ref{thm:main}.

For item (3), it is well-known that the MCF for a closed manifold in Euclidean space develop singularities at a finite time $T$ which is characterized by
\[ \lim_{t\to T^-}\norm{A}_{L^\infty(\Si)} = +\infty.\]
Since $|d\rho|=|A|$, it follows that the corresponding HRF also blows-up at $T$  such that~\eqref{e:maximal-time} holds.
\end{proof}

\section{Applications}

\subsection{Correspondence of solitons}

In this subsection, we prove the correspondence of the solitons of MCF and HRF.

Recall that a soliton to the MCF is a submanifold $X:\Si\to M$ such that
\begin{equation}\label{eq:MCF-soliton-shrinking}
  H+\ld X^{\bot}=0,
\end{equation}
where $\ld=1$ or $-1$ and $X$ is called a shrinking or expanding soliton respectively, or
\begin{equation}\label{eq:MCF-soliton-translating}
  H=V^{\bot},
\end{equation}
where $V$ is a constant vector field and $X$ is called a translating soliton.

A gradient soliton of the HRF is a pair $(\rho, g)$ such that
\begin{equation}\label{eq:HRF-soliton}
    \begin{cases}
        \Ric - \alpha ~d\rho \otimes d\rho + \nabla^2 f = \lambda g \\
        \Delta \rho = \langle \nabla f, \nabla \rho \rangle,
    \end{cases}
\end{equation}
where $f$ is a potential function, $\ld = -1, 0, 1$ and $(\rho, g)$ is called a gradient shrinking, steady, expanding soliton respectively.

Let us restate Corollary~\ref{c:main}, i.e. the correspondence of solitons in a more explicit form.
\begin{cor}
A submanifold $X:\Si\to M$ is a soliton of the MCF, i.e. satisfies \eqref{eq:MCF-soliton-shrinking} or \eqref{eq:MCF-soliton-translating}, if and only if the pair $(\rho, g)$ consisting of its Gauss map and induced metric is a gradient soliton of the corresponding HRF satisfying \eqref{eq:HRF-soliton}. The corresponding potential functions $f$ and constants $\ld$ are given in Table 1.
\end{cor}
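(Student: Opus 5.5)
The plan is to translate each soliton equation into intrinsic quantities using the identifications of Section~2, namely $A=X^*d\rho$, $h^\al_{ij}=\n_i\rho^\al_j$ and $\n H=\tau_g(\rho)$, together with the Gauss equation \eqref{e:gauss-ricci-1}. The guiding identity comes from \eqref{e:g1}:
\[
\Ric-\al\, d\rho\otimes d\rho=\ep\<H,A\>
\]
(using $\al=-\ep$). Here $\<\cdot,\cdot\>$ is the ambient pairing, which is negative definite on $N\Si$ when $\ep=-1$. The gradient soliton system \eqref{eq:HRF-soliton} is therefore equivalent to two statements:
\[
\ep\<H,A\>+\n^2 f=\ld g, \qquad \tau_g(\rho)=\<\n f,\n\rho\>.
\]
The candidate potentials are the natural ones. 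For self-similar solitons, $H+\ld X^\bot=0$, take $f=\frac{\ep\ld}{2}\<X,X\>$ up to a constant, and match the constant in Table~1 with an appropriate sign. For translators, $H=V^\bot$, take $f=\pm\ep\<X,V\>$.

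For the forward direction I will compute Hessians of the coordinate functions. Since $\n_i\n_j X=A_{ij}$, we get $\n^2\<X,V\>=\<V,A\>=\<V^\bot,A\>$. Similarly,
\[
\n^2\tfrac12\<X,X\>=g+\<X^\bot,A\>,
\]
where $\<\n_iX,\n_jX\>=g_{ij}$ because the tangent directions are spacelike. Substituting $H=-\ld X^\bot$ or $H=V^\bot$ then makes the first equation hold, with the constant $\ld$ read off up to the sign convention of \eqref{eq:HRF-soliton}. For the second equation, I differentiate the soliton identity in normal directions. In the translating case, $\n^\bot_i V^\bot=-\<V,\n_i X\>$-type terms give $\n_iH=-\<V,e_k\>h_{ik}$. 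Since $\n_k f=\pm\ep\<V,e_k\>$ and $\n\rho$ is identified with $A$, this reads $\tau_g(\rho)=\n H=\<\n f,\n\rho\>$ after matching signs. The shrinker and expander case is analogous, using $\n^\bot X^\bot=-\<X,e_k\>A_{k\cdot}$. Throughout I will track carefully the factor $\ep$ coming from the negative-definite normal metric.

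For the converse, given a gradient soliton $(\rho,g)$ of the HRF that arises from an immersion $X$, I define $W=H+\ld X^\bot$ (or $W=H-V^\bot$) and aim to show $W=0$. Subtracting the identities above from the first soliton equation gives $\<W,A\>=\n^2(f-f_0)$, where $f_0$ is the explicit potential. Subtracting the second equations gives $\n^\bot W=\<\n(f-f_0),A\>$. This step is the main obstacle: an intrinsic potential need not a priori equal the extrinsic one. My first attempt is to show that $u=f-f_0$ must be constant. Then $\<W,A\>=0$ and $\n^\bot W=0$, and from there I want $W=0$. That conclusion is clear when $A$ has no kernel in the normal directions, but in general it needs care. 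If this fails, I will weaken the statement and interpret the equivalence as holding up to the natural ambiguity: a soliton with potential $f$ corresponds to an MCF soliton after an ambient rigid motion or translation, which is allowed because $V$ and the center are free. Concretely, the condition $\n^2 u=\<W,A\>$ together with $\n^\bot W=\<\n u,A\>$ says precisely that $u$ plus the extrinsic function is again of the form $c\<X,X\>+\<X,V'\>+$const, which amounts to a change of center or of $V$. The alternative route is via the flows themselves. A gradient HRF soliton generates a self-similar HRF solution by diffeomorphisms and scaling, and Theorem~\ref{thm:main}, using uniqueness, then forces the reconstructed MCF to move self-similarly by rigid motions and dilations. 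That in turn yields \eqref{eq:MCF-soliton-shrinking} or \eqref{eq:MCF-soliton-translating}. I would adopt this route for the converse if the direct computation stalls.
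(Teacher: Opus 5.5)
Your forward direction is the paper's computation: Hessians of $\frac12\langle X,X\rangle$ and $\langle X,V\rangle$, plus $\nabla^\perp X^\perp=-A(X^\top,\cdot)$. However, your guiding identity carries a spurious factor. Equations \eqref{e:g} and \eqref{e:g1} give $\Ric-\alpha\,d\rho\otimes d\rho=\langle H,A\rangle$ with the ambient pairing, exactly as in \eqref{eq:gauss_id}, with no $\epsilon$. So the potentials are those of Table~1, also without $\epsilon$. With your choice $f=\frac{\epsilon\lambda}{2}\langle X,X\rangle$ and $\epsilon=-1$, the first soliton equation would produce $-2\lambda\langle X^\perp,A\rangle-\lambda g$, and the second would fail by a sign.

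The genuine gap is the converse. There you are given only some $f$ and $\lambda$, i.e.\ \eqref{eq:inverse_system}. No centre is given, and for $\lambda=0$ no vector $V$ is given, so $W=H-V^\perp$ is not even defined.

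\begin{itemize}
\item Your first attempt, proving $u=f-f_0$ constant, is false in general. For a shrinker centred at $x_0\neq0$ the potential is $\frac{\lambda}{2}\langle X-x_0,X-x_0\rangle$, so $u=-\lambda\langle X,x_0\rangle+\mathrm{const}$.
\item Even when $u$ is constant, $\langle W,A\rangle=0$ and $\nabla^\perp W=0$ do not force $W=0$ in higher codimension. If $\Sigma$ is a shrinker centred at $y$ inside $y+\Pi$ with $y\perp\Pi$, then $W=H+X^\perp=y\neq0$.
\item Your fallback sentence (``says precisely that $u$ plus the extrinsic function is \dots'') is exactly what must be proved, and you give no mechanism for it.
\end{itemize}

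The missing idea, which is all of the paper's Part~2, is that the ambient field $V:=H-dX(\nabla f)+\lambda X$ is constant. By the Gauss and Weingarten formulas,
\[
\bar\nabla_{e_i}V=-\langle H,A_{ik}\rangle e_k+\nabla^\perp_iH-\nabla_i\nabla_kf\,e_k-A(e_i,\nabla f)+\lambda e_i ,
\]
and \eqref{eq:inverse_system} makes the right-hand side vanish. Then translate the origin when $\lambda\neq0$ and take normal parts; this yields \eqref{eq:MCF-soliton-shrinking} or \eqref{eq:MCF-soliton-translating}. The tangential part identifies $f$ with the Table~1 potential up to a constant.

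In your notation the correct signs are $\langle W,A\rangle=-\nabla^2u$ and $\nabla^\perp W=A(\nabla u,\cdot)$. The same computation then shows $W-dX(\nabla u)$ is a constant vector, which is exactly what your fallback needs.

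Your flow-based alternative does not bypass this. Theorem~\ref{thm:main} only says that the MCF from $X$ has the self-similar pair $(\phi_t^*\rho,\,c(t)\phi_t^*g)$. But $(\rho,g)$ does not determine an immersion up to rigid motion: members of the associate family of a minimal surface, e.g.\ the catenoid and the helicoid, are locally isometric with the same Gauss map. So to conclude that $X(t)$ is self-similar you would have to check that $d\big(\sqrt{c(t)}\,X\circ\phi_t\big)$ satisfies \eqref{e:preserve-connection}. That is the same computation in disguise.
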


\begin{table}[htbp]
    \centering
    \caption{Soliton Correspondence ($\alpha = -\epsilon$)}
    \label{tab:dictionary}
    \vspace{0.3cm}
    \renewcommand{\arraystretch}{1.5}
    \begin{tabular}{@{}llll@{}}
        \toprule
        \textbf{Soliton Type} & \textbf{MCF Equation} & \textbf{Potential $f$} & \textbf{HRF Constant $\lambda$} \\ \midrule
        {Shrinker} & ${H} +  X^\perp = 0$ & $f = \frac{ |X|^2}{2}$ & $\lambda = 1$ \\
        {Expander} & ${H} - X^\perp = 0$ & $f = -\frac{ |X|^2}{2}$ & $\lambda = -1$ \\
        {Translator} & ${H} = V^\perp$ & $f = - \langle X, V \rangle$ & $\lambda = 0$ \\\bottomrule
    \end{tabular}
\end{table}

\begin{proof}
By the Gauss equation~\eqref{e:gauss-ricci-1}, we have
\begin{equation} \label{eq:gauss_id}
    \Ric - \alpha \nabla \rho \otimes \nabla \rho = \langle H, A \rangle.
\end{equation}
Here $\<\cdot, \cdot\>$ denotes the standard inner-product of the ambient space $M$.

\emph{Part 1: MCF Soliton $\implies$ HRF Soliton}\\
First assume $X$ is an MCF shrinking or expanding soliton, which satisfies~\eqref{eq:MCF-soliton-shrinking} with $\ld=1$ or $-1$. Set $f= \frac{\ld|X|^2}{2}$. We compute
\[ \n f = \ld\<\n X, X\> =  \ld X^\top\]
and
\begin{equation}\label{eq:hessian-f-1}
  \n^2 f = \ld(\<\n^2 X, X\> + \<\n X, \n X\>)=\ld\<A, X^\bot\> + \ld g.
\end{equation}
It follows from \eqref{eq:gauss_id}, \eqref{eq:hessian-f-1} and \eqref{eq:MCF-soliton-shrinking} that
\[ \Ric - \alpha \nabla \rho \otimes \nabla \rho + \n^2 f = \<H+\ld X^\bot, A\> + \ld g = \ld g.\]
This verifies the first equation of the HRF soliton equation~\eqref{eq:HRF-soliton}. For the Gauss map $\rho$, we have
\[ \nabla \rho = A, \]
and
\[ \<\n f, \n \rho\> = \<\ld X^\top, A\> =  \ld A( X^\top, \cdot). \]
On the other hand,
\[ \Delta \rho  = \n^\bot H = -\ld\n^\bot X^\bot = \ld A(X^\top, \cdot).\]
This verifies the second equation of~\eqref{eq:HRF-soliton}.

Next, assume $X$ is an MCF translating soliton, which satisfies~\eqref{eq:MCF-soliton-translating} where $V$ is a constant vector field. Set $f=-\<X, V\>$. We compute
\[ \n f = -V^\top \]
and
\[ \n^2 f= -\<A, V^\bot\>.\]
Adding this to \eqref{eq:gauss_id} gives
\[ \Ric - \alpha \nabla \rho \otimes \nabla \rho + \n^2 f = \<H - V^\bot, A\> =0.\]
For the Gauss map, we have
\[ \<\n f, \n \rho\> = -\<V^\top, A\> =  - A( V^\top, \cdot) \]
and
\[ \Delta \rho  = \n^\bot H = \n^\bot V^\bot = - A(V^\top, \cdot).\]
This verifies the HRF soliton equations~\eqref{eq:HRF-soliton} for translating solitons.

\emph{Part 2: HRF Soliton $\implies$ MCF Soliton}\\
Assume $(g, \rho)$ satisfies (\ref{eq:HRF-soliton}) for \textit{some} $f$ and $\lambda \in \{-1, 0, 1\}$. Substituting the intrinsic quantities via (\ref{eq:gauss_id}), the system (\ref{eq:HRF-soliton}) is equivalent to:
\begin{equation} \label{eq:inverse_system}
    \begin{cases}
      \langle H, A \rangle + \n^2 f = \lambda g, \\
      \nabla^\perp {H} = A(\nabla f, \cdot).
    \end{cases}
\end{equation}

Define the ambient vector field
\[ V =  H - \nabla f + \lambda X. \]
We compute the ambient covariant derivative in the direction of a tangent vector $e_i$:
\begin{align*}
    \bar{\nabla}_{e_i} V &= \bar{\nabla}_{e_i}H - \bar{\nabla}_{e_i}\nabla f + \lambda \bar{\nabla}_{e_i} X \\
    &= (-A_{H}e_i + \nabla^\perp_i H) - (\nabla_i \nabla f + A(e_i, \nabla f)) + \lambda e_i.
\end{align*}
Applying (\ref{eq:inverse_system}) to replace $\nabla^2 f$ and $\nabla^\perp \vec{H}$:
\[ \bar{\nabla}_{e_i} V = - A_{H}e_i + A(\nabla f, e_i) - (\lambda e_i - A_{H}e_i) - A(e_i, \nabla f) + \lambda e_i = 0. \]
Thus $V$ is a constant vector field in $M$.

Now, if $\lambda \neq 0$, by translating the origin such that $V=0$, we obtain
\[ H - \nabla f + \lambda X = 0.\]
Projection to the normal bundle recovers the shrinker/expander equation \eqref{eq:MCF-soliton-shrinking}.
If $\lambda = 0$, we have
\[ H - \nabla f = V.\]
Projection gives the translator equation~\eqref{eq:MCF-soliton-translating}.
\end{proof}

\begin{rmk}
As a special case of the translating soliton, a static solution of MCF (SMCF) with $H=0$, i.e. a minimal (maximal) submanifold in $\R^{n+k}$ ($\R^{n,k}$), corresponds to a steady soliton of HRF with $f=0$, which satisfies
\begin{equation}\label{eq:harmonic-einstein}
    \begin{cases}
        \Ric - \alpha ~d\rho \otimes d\rho = 0 \\
        \Delta \rho = 0.
    \end{cases}
\end{equation}
Equation~\eqref{eq:harmonic-einstein} is called harmonic-Eienstein equations, which was studied by ~\cite{Xu2012, GeJiang2018}
\end{rmk}

\subsection{Monotonicity formula}

As an application of the correspondence between MCF and HRF, we can transport the monotonicity formulas of M\"uller \cite{Muller2012} to the SMCF. 

\begin{proof}[Proof of Theorem~\ref{thm:monotonicity}]
M\"uller \cite{Muller2012} proved that for a general HRF defined on a compact manifold $\Si$, the functional 
\[ \mathcal{F} := \int_\Si\(|\nabla f|^2 + R -\alpha |\nabla \rho|^2\)e^{-f}dV_g \]
evolves as
\begin{equation} \label{eq:muller_evol}
    \frac{d}{dt}\mathcal{F} = \int_\Sigma \left( 2|\Ric - \alpha d \rho \otimes d \rho + \nabla^2 f|^2 + 2\alpha |\tau_g(\rho) - \langle \nabla f, \nabla \rho \rangle|^2 \right) e^{-f} dV_g.
\end{equation}

The identity~\eqref{eq:muller_evol} naturally extends to complete case provided that the $\mathcal{F}$-functional is integrable and integration by parts works. Then the theorem follows directly by translating the intrinsic curvature terms and the derivatives of $\rho$ into extrinsic quantities through the Gauss-Codazzi equations (\ref{e:gauss-1}-\ref{e:tau-rho}).
\end{proof}

Note that for complete, non-compact $\Sigma$, the integrability of $\mathcal{F}$ is not guaranteed without additional assumptions. For a complete submanifold with bounded curvature with at most polynomial volume growth, the exponential decay of $e^{-f}$ is sufficient to dominate. On the other hand, spacelike submanifolds in pseudo-Euclidean space are particularly well-behaved, since spacelike condition provides a natural gradient bound. In this setting, the $\mathcal{F}$-functional is well-defined in the class of solutions with prescribed growth at infinity.

Actually, besides the above $\mathcal{F}$-functional, we can also define the $\mathcal{W}$-entropy as in \cite{Muller2012}, which is also non-decreasing for SMCF. Moreover, the $\mathcal{W}$-entropy is constant and coincides with Huisken's monotonicity formula~\cite{Hui1990} for solitons. This correspondence will be further explored in a future work.

\section{Appendix}

\subsection{Volume-preserving version of Theorem \ref{thm:main}}
When the submanifold $\Si$ is closed and $M=\R^{n+k}$, the correspondence of Theorem~\ref{thm:main} also holds between the volume-preserving MCF and the volume-preserving HRF.

First recall the volume-preserving MCF introduced in \cite{Hui1990}, which is obtained from the ordinary MCF by a suitable rescaling. More precisely, let
$$\tilde X(x,t)=\psi(t)X(x,t),$$
where the scaling factor $\psi(t)$ is chosen so that the volume of $\Sigma^n$ equipped with pullback metric of $\tilde X(x,t)$ is normalized to $1$ constant along the flow. The corresponding rescaled time variable is defined by
$$\tilde t(t)=\int_0^t\psi(\tau)^2 d\tau.$$
Then the rescaled immersion $\tilde X(\tilde t)$ satisfies
\begin{equation*}
\partial_{\tilde t}\tilde X =
\tilde H+\frac{1}{n} h\,\tilde X,
\end{equation*}
where
\begin{equation*}
h= \frac{1}{\vol_{g(t)}(\Sigma)}
\int_\Sigma  H^2\,d\mu.
\end{equation*}

The volume-preserving HRF was introduced in \cite{Muller2012}. Let $(g(t),\phi(t))$ be a solution to the HRF on a closed $n$-dimensional manifold $\Sigma$. Rescale the metric by
\[ \tilde g(t):=\lambda(t)g(t), \]
so that the volume with respect to $\tilde{g}(t)$ is normalized to $1$ along the flow. Let
$$\tilde t(t):=\int_0^t \lambda(s)\,ds.$$
Then the rescaled pair $(\tilde g(\tilde t),\phi(\tilde t))$ satisfies the volume-preserving HRF
\begin{equation*}\label{eq:normalized-rhf}
\begin{cases}
\partial_{\tilde t}\tilde g
=
-2\Ric_{\tilde g}
+2\alpha\nabla\phi\otimes\nabla\phi
+\dfrac{2}{n}r\,\tilde g,\\
\partial_{\tilde t}\phi
=
\tau_{\tilde g}\phi,
\end{cases}
\end{equation*}
where $r$ denotes the average of the modified scalar curvature:
$$r=\frac{1}{\vol_{g(t)}(\Sigma)}\int_{\Sigma}\bigl(R-\alpha|\nabla\phi|^2\bigr)\,d\vol_g.$$

Since the volume-preserving MCF and the volume-preserving HRF only differ from the original flows by a rescaling, which normalizes the volume, the correspondence of Theorem~\ref{thm:main} remains valid. In fact, letting $\ld(t)=\psi(t)^2$, we get the following theorem.

\begin{thm}\label{thm:pre-vol-main}
Suppose $X_0:\Si \to \R^{n+k}$ is a closed immersed submanifold with Gauss map $\rho_0$ and pull-back metric $g_0$. Then the volume-preserving MCF 
\begin{equation*}
\begin{cases}
    \partial_{\tilde t}\tilde X= \tilde H + \frac{1}{n}\tilde h\,\tilde X, \\
\tilde{X}(0) = X_{0}.
\end{cases}
\end{equation*}
and the volume-preserving HRF
\begin{equation*}
\begin{cases}
\partial_{\tilde t}\tilde g
=-2\Ric_{\tilde g} - 2 \nabla \phi \otimes \nabla \phi+ \dfrac{2}{n}r\,\tilde g,\\
\partial_{\tilde t}\phi = \tau_{\tilde g}\phi,\\
\tilde g(0) = g_{0}, \quad \tilde \rho(0) = \rho_{0},
\end{cases}
\end{equation*}
are equivalent.
\end{thm}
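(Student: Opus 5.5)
We reduce Theorem~\ref{thm:pre-vol-main} to Theorem~\ref{thm:main} by checking that the two rescalings agree. Note that $\alpha=-\ep=-1$ here, and $\phi$ stands for the Gauss map $\rho$.

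\emph{Step 1 (unnormalized correspondence).} Let $X(t)$ be the MCF from $X_0$ on $[0,T)$. By Proposition~\ref{p:MCF2HRF} and Lemma~\ref{l:HRF2MCF}, its Gauss map and induced metric $(\rho(t),g(t))$ form the solution of the HRF~\eqref{e:mcf-gauss} from $(\rho_0,g_0)$. Conversely, $X(t)$ is reconstructed from $(\rho(t),g(t))$.

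\emph{Step 2 (the MCF rescaling becomes the HRF rescaling).} Put $\tilde X=\psi(t)X$ with $\tilde t=\int_0^t\psi^2$, where $\psi$ is chosen so that $\vol(\tilde X(t))=1$. Then:
\begin{itemize}
\item the Gauss map of $\tilde X$ is still $\rho(t)$, since a homothety does not change tangent planes;
\item the induced metric of $\tilde X$ is $\psi^2 g(t)=\lambda(t)g(t)$ with $\lambda=\psi^2$;
\item the HRF time $\int_0^t\lambda$ equals the MCF time $\tilde t$;
\item the volume normalizations agree, since $\vol_{\lambda g}(\Si)=\vol(\tilde X(t))$.
\end{itemize}
So the volume-normalized HRF built from $(\rho,g)$ is exactly the pair $(\tilde\rho,\tilde g)$ consisting of the Gauss map and induced metric of $\tilde X$.

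\emph{Step 3 (consistency of the normalizing constants).} By Müller's computation, the normalized HRF pair satisfies the stated equation with $r$ equal to the average of $R-\alpha|\n\rho|^2$. By~\eqref{e:gauss-scalar-1} with $\ep=1$,
\[
R+|\n\rho|^2=|H|^2,
\]
so $r=\tilde h$. As a direct check, we would also compute $\partial_{\tilde t}\tilde g$ from $\partial_{\tilde t}\tilde X=\tilde H+\frac1n\tilde h\tilde X$:
\[
\partial_{\tilde t}\tilde g=-2\<\tilde H,\tilde A\>+\tfrac{2}{n}\tilde h\,\tilde g.
\]
Using~\eqref{e:gauss-ricci-1}, this becomes $-2\Ric_{\tilde g}-2\,d\rho\otimes d\rho+\frac2n r\tilde g$. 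For the Gauss map, the radial term $\frac1n\tilde h\tilde X$ is a homothety generator and does not change tangent planes, and its tangential part only reparametrizes. Hence $\partial_{\tilde t}\rho=\tau_{\tilde g}\rho$, using~\eqref{e:tau-rho} and the scaling invariance $\tau_{\lambda g}=\lambda^{-1}\tau_g$.

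\emph{Step 4 (converse).} Start from the volume-preserving HRF solution, undo the rescaling to get an HRF solution, and reconstruct $X(t)$ by Lemma~\ref{l:HRF2MCF}. Then rescale by $\psi=\lambda^{1/2}$. Inverting the time changes requires $\lambda,\psi>0$, which holds. Uniqueness for closed $\Si$ transfers through these bijections, so both directions of the equivalence follow.

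\emph{Main obstacle.} The delicate point is bookkeeping: checking that $\lambda=\psi^2$, the two time reparametrizations, and the identification $r=\tilde h$ (which uses $\alpha=-1$) all match simultaneously. I expect the sign and factor conventions in $\tilde h$ (written with $H^2$) versus $r$ to be the place where care is needed.
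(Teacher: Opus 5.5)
Your proposal is correct and follows the paper's argument: the paper also reduces the statement to Theorem~\ref{thm:main}, observing that both volume-preserving flows are volume-normalizing rescalings of the unnormalized flows and setting $\lambda(t)=\psi(t)^2$. Your additional checks make explicit what the paper leaves implicit: that the homothety leaves the Gauss map unchanged, that the two time reparametrizations coincide, and that $r=\tilde h$ follows from the traced Gauss equation with $\alpha=-1$.
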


\subsection{Harmonic-Ricci-type equations in a curved ambient space}

In this subsection, we show that in a general Riemannian or pseudo-Riemannian ambient space, the pullback metric and the Gauss map of the MCF  also satisfies a harmonic-Ricci-type system. The evolution of the Gauss map of the MCF in a Riemannian manifold was studied by Wang \cite{Wang2003}. 

Let $(M, h)$ be an $(n+k)$-dimensional Riemannian or pseudo-Riemannian manifold and $\Si$ be an $n$-dimensional manifold. Suppose $X:\Sigma \times [0,T)\to M$
is a solution to the MCF starting at an immersion $X_0$. When $M$ is pseudo-Riemannian, we require $X$ to be space-like for all $t\in [0,T)$. This can be guaranteed if $X_0$ is a space-like graph in a product space, see~\cite{EH1991} for example. 

Now consider the canonical Grassmannian bundle
$$G_M := \left\{ \text{oriented $n$-dimensional (spacelike) subspaces of } T_yM|\ y\in M\right\}\xrightarrow{p} M, $$
whose fiber space is isometric to the usual Grassmannian manifold $G$.
The bundle-valued Gauss map of $X$, $$\bar\rho:\Sigma\times[0,T)\to G_M,$$  is defined by sending each point $(x,t)$ to the tangent space $dX(T_x\Sigma)$.

Observe that 
$$X(x,t)=p\circ\bar\rho(x,t),$$
hence the Gauss map defined in this way already contains the position data of the immersion $X$. In particular, when $M$ is either $\mathbb{R}^{n+k}$ or $\mathbb{R}^{n,k}$, the Grassmannian bundle is trivial, namely $G_M=M\times G$. In this case, one may recover the Gauss map used in Section 2 by forgetting the position variable.

By representing each subspace as an unit $n$-product, we may embedd the Grassmannian bundle $G_M$ into the $n$-wedge-product bundle $\wedge^n M$. Then the Gauss map can be viewed as a section in the pull-back bundle $X^*(\wedge^n M)$, by sending each point $(x,t)$ to $e_1\wedge\cdots\wedge e_n$, where $\{e_1, \cdots, e_n\}$ is an oriented orthonormal basis of $dX(T_x\Sigma)$. Thus we can define a modified  differential of $\b\rho$ by
$$\b d\b\rho(\xi)=\b\nabla^h_{dX(\xi)}(e_1\wedge\cdots\wedge e_n),$$
where $\b\nabla^h$ is the canonical connection on $\wedge^nM$ induced by the Levi-Civita connection on $M$. 

\begin{thm}\label{thm:app}
Let $X:[0,T)\times\Sigma\to M$ be a solution to the MCF starting at an immersed submanifold  $X_0:\Sigma \to M$. Assume that $X$ space-like for all $t\in [0, T)$ if $M$ is pseudo-Riemannian. Then the corresponding pullback metric and  Gauss map $(g(t), \b\rho(t))$ satisfy the following harmonic-Ricci-type system:
\begin{equation}\label{eq:general-evolution}
\begin{cases}
\partial_t g = -2\Ric_g - 2\langle \b d\b\rho\otimes \b d\b\rho\rangle_{\b h} + 2X^*\Ric_h^\top,\\
\partial_t \b\rho = \tau_g(\b\rho) + \Ric_h^{*\bot},
\end{cases}
\end{equation}
where
$$2X^*\Ric_h^\top(\partial_i,\partial_j)=R_h(dX(\partial_i),dX(\partial_l),dX(\partial_l),dX(\partial_j)),$$
$$h(\Ric_h^{*\bot}(e_i),v)=R^h(e_i,e_l,e_l,v),$$
and 
$$\langle \b d\b\rho\otimes \b d\b\rho\rangle_{\b h}(\partial_i, \partial_j) = \b h \bigl(\b d\b\rho(\partial_i),\b d\b\rho(\partial_j)\bigr).$$

Moreover, when $M$ is $\mathbb{R}^{n+k}$ or $\mathbb{R}^{n,k}$ (and $X_0$ is spacelike),  the evolution of $(g(t),\rho(t))$ reduces to the HRF \eqref{eq:hrf-gauss} with coupling constant $\alpha=-\ep$.
\end{thm}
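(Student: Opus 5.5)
Both equations will come from the standard first-variation formulas under $\partial_t X = H$, with the ambient curvature terms now kept. For the metric, $g_{ij} = h(dX(\partial_i), dX(\partial_j))$. Since $[\partial_t,\partial_i]=0$ and $\bar\nabla$ is torsion-free,
\[ \partial_t g_{ij} = h(\bar\nabla_i H, dX(\partial_j)) + h(dX(\partial_i), \bar\nabla_j H) = -2\langle H, A_{ij}\rangle. \]
This uses only that $H$ is normal. Next I invoke the Gauss equation in a curved ambient space:
\[ R_{ijkl} = R_h(dX\partial_i, dX\partial_j, dX\partial_k, dX\partial_l) + \ep\,(\text{quadratic terms in } A). \]
Tracing it gives
\[ \Ric_{ij} = \bigl(\text{trace of } R_h \text{ over tangent directions}\bigr) + \ep\bigl(\langle H, A_{ij}\rangle - \langle A_{ik}, A_{jk}\rangle\bigr). \]
The traced ambient term is exactly the term $X^*\Ric_h^\top$ in the statement, and the sign conventions have to be fixed so it enters as $+2X^*\Ric_h^\top$ after solving for $-2\langle H,A\rangle$. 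The remaining step is to identify $\langle A_{ik}, A_{jk}\rangle$ with $\langle \b d\b\rho\otimes \b d\b\rho\rangle_{\b h}$. Writing $\b\rho = e_1\wedge\cdots\wedge e_n$,
\[ \b\nabla^h_{dX(\partial_i)}\b\rho = \sum_l e_1\wedge\cdots\wedge (\bar\nabla_i e_l)^\perp\wedge\cdots\wedge e_n, \]
because tangential components cancel in the wedge. Its $\b h$-norm pairing is therefore $\sum_l \langle A(\partial_i,e_l), A(\partial_j,e_l)\rangle$. In the pseudo-Riemannian case normal vectors are negative, and this is what produces $\ep$. I will check that the sign $\ep$ is absorbed into the pairing $\b h$ on $\wedge^n M$, so that the result is the uniform expression $-2\langle \b d\b\rho\otimes\b d\b\rho\rangle_{\b h}$.

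For the Gauss map, I follow Wang~\cite{Wang2003} but work in $X^*(\wedge^n M)$. I take $\{e_l\}$ orthonormal tangent frames with $\bar\nabla_t e_l$ tangentially trivial, for instance parallel for the time-connection of Lemma~\ref{l:bundle-isometry}. Then
\[ \b\nabla_t\b\rho = \sum_l e_1\wedge\cdots\wedge (\bar\nabla_t e_l)^\perp\wedge\cdots\wedge e_n, \qquad (\bar\nabla_t e_l)^\perp = (\bar\nabla_{e_l} H)^\perp = \nabla^\perp_{e_l} H. \]
On the other side, the tension field $\tau_g(\b\rho) = \tr\b\nabla^2\b\rho$ contains the component $\nabla^\perp_l h_{kl}$ in the $e_k\mapsto\nu$ slot, together with second-order terms with two normal slots, or back to $\b\rho$, that are quadratic in $A$. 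The latter must be matched with the corresponding terms of $\b\nabla_t\b\rho$, as in the flat case. The difference between $\nabla^\perp_l h_{kl}$ and $\nabla^\perp_k H$ is measured by the Codazzi equation in a curved ambient space:
\[ \nabla^\perp_l h_{kl} - \nabla^\perp_k h_{ll} = \bigl(R_h(e_l,e_k)e_l\bigr)^\perp. \]
So $\b\nabla_t\b\rho = \tau_g(\b\rho) + \Ric_h^{*\bot}$, with $\Ric_h^{*\bot}$ read as the $TG$-valued term $e_i\mapsto (R^h(e_i,e_l)e_l)^\perp$ of the statement.

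The final claim then follows. When $M$ is flat, both curvature terms vanish and $G_M = M\times G$ is trivial, so $\b\rho$ projects to $\rho$ and $\b d\b\rho = d\rho$. This recovers Proposition~\ref{p:MCF2HRF} and the HRF~\eqref{eq:hrf-gauss} with $\alpha = -\ep$.

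I expect the main obstacle to be the Gauss map equation, specifically the precise meaning of $\tau_g(\b\rho)$ for a map into the bundle $G_M$ with the modified differential $\b d$. It has to be defined as the trace of $\b\nabla^h$ applied twice, with the quadratic normal–normal components matching between the time and space sides. A secondary difficulty is getting the sign conventions for $R_h$ right in both curvature terms.
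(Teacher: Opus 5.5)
Your metric computation is the paper's argument: the first variation $\p_t g_{ij}=-2h(H,A_{ij})$, the traced Gauss equation, and the Gram-determinant identity $\b h(E_{l\alpha},E_{m\beta})=\de_{lm}\,h(v_\alpha,v_\beta)$, which gives $\b h(\b d\b\rho(\p_i),\b d\b\rho(\p_j))=\sum_l h(A_{il},A_{jl})$. Your closing traced-Codazzi step also matches the paper. One bookkeeping point: with $\langle\cdot,\cdot\rangle$ the ambient pairing, as in your formula for $\p_t g$, the traced Gauss equation has no factor $\ep$. The $\ep$ only appears in components with respect to an orthonormal normal frame.

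\textbf{The gap is in the Gauss-map equation.} You take $\tau_g(\b\rho)=\tr(\b\n^h)^2\b\rho$, the full second covariant derivative in $\wedge^nM$, and expect its quadratic terms to be ``matched'' by terms of $\b\n_t\b\rho$. There is nothing to match them with. By your own formula, $\b\n_t\b\rho$ lies in the span of the $E_{l\alpha}$, which have exactly one normal slot. It has no $\b\rho$-component, because $\b h(\b\n_t\b\rho,\b\rho)=\tfrac12\p_t\,\b h(\b\rho,\b\rho)=0$, and it has no two-normal-slot terms. By contrast, $\b h(\tr\b\n^2\b\rho,\b\rho)=-\sum_i\b h(\b\n_i\b\rho,\b\n_i\b\rho)=-\sum_{i,l}h(A_{il},A_{il})$, which is nonzero wherever $A\neq0$. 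In codimension $k\geq2$ there are also two-normal-slot terms built from $\sum_i A(e_i,e_k)\wedge A(e_i,e_m)$. So with your definition the second equation is false, already for $M=\R^{n+k}$. Wang's flat result does not work by matching either: there $\tau_g$ is the intrinsic tension field into $G$, not the full $\wedge^n$-Laplacian.

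\textbf{The fix is to project, not match.} The quadratic terms are the components of $\tr\b\n^2\b\rho$ normal to the fiber Grassmannian $G\subset\wedge^n$, i.e.\ its second fundamental form evaluated on $\b d\b\rho$. So $\tau_g(\b\rho)$ must be the vertical part of $\tr\b\n^2\b\rho$, its component in the span of the $E_{l\alpha}$. The paper builds this in from the start. Via $E_{i\alpha}\leftrightarrow e_i^*\otimes v_\alpha$ it treats $\b d\b\rho$ as a section of $T^*\Si\otimes\b\rho^*(\G_M^{\top *}\otimes\G_M^\bot)$ with the connections $\n^\top,\n^\bot$. It then takes $\tau_g(\b\rho)$ to be the trace of the covariant derivative of $\b d\b\rho$ in that bundle. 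Since $dX\colon T\Si\to\b\rho^*\G_M^\top$ is a connection-preserving isometry, $A=X^*\b d\b\rho$ gives $\tau_g(\b\rho)=(\n_{\p_l}A)(\p_l,dX^{-1}(\cdot))$ directly, with no quadratic terms arising. The traced Codazzi equation then finishes the proof exactly as in your last step.
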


Before the proof, we introduce the canonical tautological and cotautological bundles
$$\G_M^\top\xrightarrow{p_1}G_M,\qquad \G_M^\bot\xrightarrow{p_2}G_M.$$
The fiber of $\G_M^\top$ at $\sigma(y)\in G_M$ is the $n$-dimensional subspace $\sigma(y)$ itself, and the fiber of $\G_M^\bot$ at $\sigma(y)$ is the orthogonal complement of $\sigma(y)$. These bundles carry canonical metrics and connections, denoted by
$(\G_M^\top,h^\top,\nabla^\top)$  and $(\G_M^\bot,h^\bot,\nabla^\bot)$.

By the definition of $\b d\b\rho$, 
\begin{equation}\label{al:drho}
\b d\b\rho(\xi) = \b\nabla^h_{dX(\xi)}(e_1\wedge\cdots\wedge e_n) \notag = h\bigl(\nabla^\bot_\xi e_i,v_\alpha\bigr)\, e_1\wedge\cdots\wedge v_\alpha\wedge\cdots\wedge e_n .
\end{equation}
We denote $E_{i\alpha}=e_1\wedge\cdots\wedge v_\alpha\wedge\cdots\wedge e_n$, where $v_\alpha$ is placed in the $i$-th factor. There is one-to-one corresponding between $E_{i\alpha}$ and $e_i^*\otimes v^\alpha$ which gives a frame of $\b\rho^*(\G_M^{\top *}\otimes \G_M^\bot)$. Thus 
$\b d\b\rho\in \Gamma\bigl(T^*\tilde\Sigma\otimes \b\rho^*(\G_M^{\top *}\otimes \G_M^\bot)\bigr)$ and 
under a local frame $\{\partial_i,\partial_t\}$ of $T(\tilde{\Sigma})$,
$$\b d\b\rho=\nabla_k\b\rho_l^\alpha\,\partial_k^*\otimes e_l^*\otimes v_\alpha, \qquad
X^*\b d\b\rho =\nabla_k\b\rho_l^\alpha X_j^l\,\partial_k^*\otimes\partial_j^*\otimes v_\alpha,$$
where $X_j^l=h(dX(\partial_j),e_l)$.

Let us recall the second fundamental form of a subbundle. Since
$$X^*(TM)=\b\rho^*(\G_M^\top)\oplus\b\rho^*(\G_M^\bot),$$
we may regard $\b\rho^*(\G_M^\top)\to\Sigma$ as a subbundle of $X^*(TM)\to\Sigma$. Its second fundamental form is defined by
\begin{equation}\label{eq:second-form}
B_{\b\rho^*(\G_M^\top)}(\xi,e):=\pi^\perp\circ\nabla_{dX(\xi)}e,
\end{equation}
and 
$$B_{\b\rho^*(\G_M^\top)}\in \Gamma\bigl(T^*\tilde{\Sigma}\otimes \b\rho^*(\G_M^{\top *}\otimes \G_M^\bot)\bigr)$$
can be viewed as a section of the same bundle as $\b d\b\rho$. In a local frame, we write
$$B_{\b\rho^*(\G_M^\top)}=h_{il}^\alpha\,\partial_i^*\otimes e_l^*\otimes v_\alpha.$$
By \eqref{al:drho} and \eqref{eq:second-form}, we obtain
$$\nabla_i\b\rho_l^\alpha=h_{il}^\alpha.$$

Let $A\in\Gamma(T^*\tilde{\Sigma}\otimes T^*\Sigma\otimes N\b \Sigma)$ be the second fundamental form of $X$. Since $dX$ can be regarded as a bundle isomorphism from $T\Sigma$ to $\b\rho^*\G_M^\top$, the second fundamental forms of the two subbundles satisfy, 
$$A_{ij}^\alpha=h_{il}^\alpha X_j^l.$$ 
Consequently, 
\begin{equation}\label{eq:rho-sec}
A=X^*\b d\b\rho.
\end{equation}

\begin{proof}[Proof of Theorem \ref{thm:app}]
We first compute
\begin{align*}
\partial_t g(\partial_i,\partial_j)= -2h(H,A(\partial_i,\partial_j)).
\end{align*}
Taking the trace of the Gauss equation with respect to the frame $\{\partial_l\}$, we obtain
\begin{equation}\label{eq:gauss}
X^*\Ric_h^\top(\partial_i,\partial_j)=\Ric_g(\partial_i,\partial_j)+h(A(\partial_i,\partial_l),A(\partial_j,\partial_l))-h(A(\partial_i,\partial_j),H).
\end{equation}
And by \eqref{eq:rho-sec}, we have 
$$\langle \b d\b\rho\otimes \b d\b\rho\rangle_{\b h,ij}=h(A_{il},A_{jl}).$$
This gives the first equation of \eqref{eq:general-evolution}.

We next compute the evolution of the Gauss map. A direct calculation gives
\begin{equation*}
\partial_t\b\rho = \b d\b\rho(\partial_t) = h(\nabla^\bot_{e_i}H,v_\alpha)e_i^*\otimes v_\alpha.
\end{equation*}
Taking the trace of the Codazzi equation with respect to the frame $\{e_l\}$ yields
$$h(\Ric_h^{*\bot}(e_i),v) +h((\nabla_{\partial_l}A)(\partial_l,dX^{-1}(e_i)),v)=h(\nabla^\bot_{e_i}H,v).$$
By \eqref{eq:rho-sec},  we obtain the second equation of \eqref{eq:general-evolution}.

Finally, when $M$ is flat, all the curvature terms in \eqref{eq:general-evolution} vanishes and the bundle-valued Gauss map $\b{\rho}:\Si\to G_M$ reduces to the usual Gauss map $\rho:\Si\to G$. Then it is easy to see that \eqref{eq:general-evolution} reduces to the HRF \eqref{eq:hrf-gauss}.

\end{proof}


\end{document}